\documentclass[10pt,a4paper]{article}

\usepackage[leqno]{amsmath}
\usepackage{amssymb,amsthm,upref,amscd}
\usepackage[T1]{fontenc}
\usepackage{times}
\usepackage{cite}%ergaenzt von T.Weth
\usepackage{amsfonts}
\usepackage[colorinlistoftodos]{todonotes}
\usepackage{setspace}

\setcounter{MaxMatrixCols}{30}%
\oddsidemargin0,5cm
\evensidemargin0,5cm
\setlength{\textwidth}{15cm}
\sloppy

\usepackage{color}
\usepackage{hyperref}
\usepackage{graphicx}
\usepackage{wrapfig}
\usepackage{setspace}

%%%%%%%%%%%%%%%%%%%%%%%%%%%%%%%%%%%%%%%%%
% following suggestion of David
\usepackage[usenames,dvipsnames]{pstricks}
\usepackage{epsfig}
% \usepackage{pst-grad} % For gradients
% \usepackage{pst-plot}
%%%%%%%%%%%%%%%%%%%%%%%%%%%%%%%%%%%%%%%%%

\newtheorem{thm}{Theorem}[section]
\newtheorem{lem}[thm]{Lemma}
\newtheorem{prop}[thm]{Proposition}

\newtheorem{example}[thm]{Example}

\theoremstyle{definition}

\theoremstyle{remark}
\newtheorem*{notation}{Notation}
\theoremstyle{remark}
\newtheorem{remark}[thm]{Remark}
\numberwithin{equation}{section}

\newcommand{\N}{{\mathbb N}
}

\newcommand{\R}{{\mathbb R}}

\definecolor{blue}{rgb}{0,0,1}

   %DRAFT: Put in red the comments

\newcommand{\NN}{{\mathbb N}}
\newcommand{\RR}{{\mathbb R}}

\newcommand{\gop}{\gamma\left(\frac{u^2+|\nabla u|^2}{2}\right)}

\newcommand{\di}{\displaystyle}

\newcommand{\ri}{\rightarrow}
\newcommand{\mce}{{\mathcal E}}

\newcommand{\intom}{\int_\Omega}

\newcommand{\beq[1]}{\begin{equation}\label{eq:#1}}
\newcommand{\eeq}{\end{equation}}

\begin{document}

%\title{Qualitative analysis of solutions to a class of quasilinear elliptic Dirichlet problems}
\title{Nonhomogeneous quasilinear elliptic problems: linear and sublinear cases}

\author{Louis Jeanjean\footnote{Laboratoire de Math\'ematiques
UMR 6623, Universit\'e de Bourgogne Franche-Comt\'e,
16 route de Gray,
25030 Besan\c con Cedex, France ; {\tt	louis.jeanjean@univ-fcomte.fr}} \ $\&$ Vicen\c{t}iu D. R\u{a}dulescu\footnote{Faculty of Applied Mathematics, AGH University of Science and Technology, 30-059 Krakow, Poland \& Department of Mathematics, University of Craiova, 200585 Craiova, Romania; {\tt radulescu@inf.ucv.ro}}}

\maketitle

\begin{abstract}
We are concerned with a class of second order quasilinear elliptic equations driven by a nonhomogeneous differential operator introduced by C.A. Stuart \cite{stuart} and whose study is motivated by models in Nonlinear Optics. We establish sufficient conditions for the existence of at least one or two non-negative solutions. Our analysis considers the cases when the reaction has either a sublinear or a linear growth. In the sublinear case, we also prove a nonexistence property. The proofs combine energy estimates and variational methods.
% In particular, the monotonicity trick is applied in order to overcome the lack of {\it a priori} bounds on the Palais-Smale sequences.

\smallskip
\noindent{\bf Keywords}: mountain pass solution, Palais-Smale condition, non-homogeneous differential operator.

\smallskip\noindent
{\bf 2010 Mathematics Subject Classification:} 35J60, 35J62, 58E05.
\end{abstract}

\section{Introduction}

Let
$\Omega$ be a bounded open subset of $\RR^N$.
Consider the linear Dirichlet problem
\begin{equation}\label{pro00}
\left\{  \begin{array}{ll}
    -\Delta u+ u=\lambda u+h &\mbox{in $\Omega$} \\
    u= 0 &\mbox{on $\partial\Omega$},
    \end{array}\right.
\end{equation}
where $h\in L^2(\Omega)$ is a given function and $\lambda$ is a real parameter. Then the following results are true:\\
(i) if $h\equiv 0$ then problem \eqref{pro00} has a positive solution (which is unique up to a multiplicative constant) if and only if $\lambda=1+\lambda_1$, where $\lambda_1$ is the lowest eigenvalue of the Laplace operator $(-\Delta)$ in $H^1_0(\Omega)$;\\ (ii) if $h \gneqq 0$ then problem \eqref{pro00} has a positive solution if and only if $\lambda<1+\lambda_1$. Moreover, this solution is unique. We refer to H.~Brezis \cite[Chapter 9]{brezis} for more details. \smallskip

These classical results have been extended recently by C.A.~Stuart \cite{stuart}, provided that the left-hand side in problem \eqref{pro00} is replaced with the nonlinear differential operator
$$Su:= -\mbox{div}\,\left[\gop\nabla u\right]+\gop u,$$
where
 $\gamma:[0,\infty)\ri\RR$ is a positive continuous function. Namely, he considered the problem
\begin{equation}\label{pro0}
\left\{  \begin{array}{ll}
 \displaystyle   -\mbox{div}\,\left[\gop\nabla u\right]+\gop u=  \lambda u +h &\mbox{in $\Omega$} \\
    u= 0 &\mbox{on $\partial\Omega$}.
    \end{array}\right.
\end{equation}

In \cite{stuart}, under the assumption that $h \in L^2(\Omega)$, $h \geq 0$, sufficient conditions such that problem \eqref{pro0} admits two non-negative weak  solutions are established. \smallskip

A motivation for replacing $- \Delta $ by the operator $S$ stems from C.A.~Stuart and H.-S.~Zhou's works \cite{stuzho1,stuzhocv,stuzhosiam,stuzho2} in relationship with guided traveling waves propagating through a  self-focusing dielectric.
This problem is central to the study of  transverse electric
field modes (TE-modes) propagating in an axisymmetric dielectric such
as an optical fiber.
The mathematical analysis of such phenomena in a nonlinear dielectric medium is part of the study of special solutions of Maxwell's  equations coupled with a nonlinear constitutive relation between the electric field and the electric displacement field.
The main reason in this relationship is that a TE-mode is a solution of Maxwell's equations in which the electric field is an
axisymmetric, monochromatic traveling wave which is everywhere transverse to the
direction of propagation. The analysis developed by C.A.~Stuart and H.-S.~Zhou's includes the case of guided
TM-modes propagating through a self-focusing anisotropic dielectric. These are special solutions
of Maxwell's equations with a nonlinear constitutive relation of a type commonly
used in nonlinear optics when treating the propagation of waves in a cylindrical wave-guide. \smallskip

When $\gamma(t)\equiv 1$ (or more generally when $\gamma(t)$ is a positive constant) the operator $S$ reduces to the Laplace operator. The case where $\gamma(t)$ is non constant corresponds to a quasilinear setting. \smallskip

In \cite{stuart}, C.A.~Stuart introduced the following assumptions:
\begin{itemize}
\item \textbf{(s1)} $\gamma$ is non-increasing on $[0,\infty)$ and $\gamma(t) \underset{t \rightarrow \infty}{\longrightarrow} \gamma(\infty)>0$.
\item \textbf{(s2)} Setting $\Gamma(t):= \int_0^t \gamma(s) ds,$  there exists $\rho > 0,\;$ such that  for all $ (t,s) \in [0,\infty)^2,$ $$\Gamma(t^2) \geq \Gamma(s^2) + 2s\gamma(s^2)(t - s) + \rho(t - s)^2.$$
\item \textbf{(s3)} Setting $K(t) = \Gamma(t) - \Gamma'(t)t$, we have $ \lim_{t \to \infty}K(t) < \infty.$
\end{itemize}

In \cite[p. 329]{stuart}, examples of functions $\gamma$ satisfying (s1)-(s3) are given.

Under the above assumptions the main result in  \cite{stuart} is concerned with the case  $\gamma(\infty)<\gamma(0)$. The existence of two non-negative weak solutions is established for all $\gamma(\infty)+\lambda_1\gamma(\infty) < \lambda < \gamma(0 )+\lambda_1\gamma(\infty)$, assuming that $h \geq 0$ is sufficiently small. Actually, defining
$$\Psi (u) := \int_{\Omega} \Gamma \Big(\frac{u^2 + |\nabla u|^2}{2}\Big)\, dx - \frac{\lambda}{2} \int_{\Omega} u^2 \, dx - \int_{\Omega} hu \, dx,$$
then a first solution is obtained as a local minima of $\Psi (u)$. The second solution corresponds to a mountain pass level for $\Psi(u)$. See \cite[Theorem 1.1]{stuart} for a precise statement. \smallskip

\begin{remark}
Setting $F(z,p) = \Gamma(\frac{1}{2}[z^2 + |p|^2])$ for $z \in \RR$ and $p \in \RR^N$, it is well-known, see for example \cite[Chapter 10]{Gi-Tr}, that the ellipticity of \eqref{pro0} is equivalent to the convexity of $F(z,p)$ with respect to $p$ for all $(z,p)$. Also it is easily checked that this condition corresponds to the convexity of $g(t) = \Gamma(t^2)$ on $(0, \infty)$. Now, as shown in \cite{stuart}, the stronger hypothesis $(s2)$ ensures the uniform ellipticity of \eqref{pro0}.
\end{remark}

The mathematical treatment of \eqref{pro0} is characterized by two main difficulties. Since the function $\gamma(t)$ is bounded between two positive constants, the right-hand side of \eqref{pro0} has a, somehow, linear growth as well as the reaction term $\lambda u$. It is then expected that to prove the existence of a bounded Palais-Smale (or Cerami) sequence at the mountain pass level for $\Psi(u)$ may be challenging.  This is reminiscent of what happen in problems of type \eqref{pro0} with a linear operator (such as the Laplacian) on the left-hand side and a nonlinearity which is asymptotically linear on the right-hand side. Then, one need to understand precisely the interaction between the nonlinearity and the spectrum of the linear operator in order to prove the existence of {\it a priori} bounds on the Palais-Smale sequences. In this direction we refer, for instance, to \cite{BaBeFo,GiLuRa,JeTa,MaSo}.  In the present paper, this difficulty is reinforced by the nonlinear character of the quasilinear operator and also by the fact that its nonhomogeneous character do not permit to benefit from certain classical techniques as, for example, the one presented in \cite{TaTeUl}. We also note that these features of the quasilinear operator make the study of its action on weakly convergent sequences non standard.  \smallskip

The aim of the present paper is twofold. First, we extend the results in \cite{stuart} to the case when the reaction has a nonlinear growth.
We consider the problem
\begin{equation}\label{pro1}
\left\{  \begin{array}{ll}
\displaystyle    -\mbox{div}\,\left[\gop\nabla u\right]+\gop u=  f(u)+h &\mbox{in $\Omega$} \\
    u= 0 &\mbox{on $\partial\Omega$},
    \end{array}\right.
\end{equation}
where $f$ is a given continuous function and $h\in L^2(\Omega)$ is non-negative.

We distinguish between the cases when $f$ has either a sublinear decay or a linear growth at infinity. Secondly, we optimize the assumptions on the quasilinear part, namely on the function $\gamma : [0, \infty) \to \RR$. In particular, condition $(s3)$ will be no more needed and $(s2)$ is replaced by the assumption that the function $\Gamma(t^2)$ is strictly convex on $[0, \infty)$.

\section{Main results}
We first study the case where the right-hand side in problem \eqref{pro1} has a sublinear growth (in a prescribed sense). Next, we extend the results obtained in Stuart \cite{stuart}, provided that $f$ is no longer linear, but it has a linear growth.
In both settings we establish sufficient conditions on $f$ that guarantee the existence of non-negative solutions for problem \eqref{pro1}.

\subsection{Sublinear growth case}

In this case we write $f(t)$ as $f(t)= \nu g(t)$ for some $\nu >0$ and assume that $g: \mathbb{R} \rightarrow \mathbb{R}$ is a continuous function. The following assumptions will be used to state our results:

\begin{itemize}
\item \textbf{(g1)} $g(t) =o(t)$ as $t \to + \infty$,
\item \textbf{(g2)} $\exists t_0 \in [0, \infty)$ such that $G(t_0) := \displaystyle\int_{0}^{t_0} g(s)\, ds > 0,$
\item \textbf{(g3)} $\exists C>0, \forall t \in \mathbb{R}, \lvert g(t) \rvert \leq C\lvert t \rvert$.
\end{itemize}

Note that since we are only interested in non-negative solutions we can assume, without lack of generality, that
 $g(t) =0$ for all $t \in (- \infty, 0].$

\begin{example}\label{ex2} The following functions satisfy hypotheses (g1)--(g3):

(i) $g(t)=\sin (at)$ for $a\in \RR\setminus\{0\}$;

(ii) $g(t)=|t|^\alpha/(1+|t|^\beta)$ for  $1\leq\alpha <\beta+1$;

(iii) $g(t)=\min\{|t|^\alpha,|t|^\beta\}$  for  $0<\alpha <1<\beta$;

(iv) $g(t)=\log(1+|t|)$;

(v) $g(t)=\exp \{(\log(1+|t|))^\alpha\}-1$  for $\alpha\in (0,1)$;

(vi) $g(t)=\exp \{\log(1+|t|)/\log\log (2+|t|)\}-1$.
\end{example}

Concerning the operator $S$, we assume :
\begin{itemize}
\item \textbf{(q1)} The function $\gamma : [0, \infty) \to \RR$  is continuous and there exist some constants $0 < \gamma_{\text{min}} \leq \gamma_{\text{max}}$ such that
$\forall t \in [0, \infty),$ $ \gamma_{\text{min}} \leq \gamma(t) \leq \gamma_{\text{max}}.$
\item \textbf{(q2)}  The function $ t \mapsto  \Gamma(t^2)$ is convex on $[0, \infty).$
\end{itemize}

\begin{thm}\label{thm1} Assume that the conditions $(g1)$ and $(q1)-(q2)$ are fulfilled and that $h \in L^2(\Omega)$, $h \geq 0$. We have the following properties for problem \eqref{pro1}, where $f(s) = \nu g(s)$,
\begin{itemize}
\item[(i)] Let $h \gneqq 0$. Then problem  \eqref{pro1} admits a non-negative nontrivial solution all $\nu > 0$.
\item[(ii)] Let $h \equiv 0$, then we have,
\begin{enumerate}
\item[(a)] If we assume $(g1)-(g2)$, there exists a $\nu_1 > 0$ such that problem \eqref{pro1} admits a non-negative nontrivial solution if $\nu > \nu_1.$
\item[(b)] If we assume $(g1)-(g3)$, there exists a  $\nu_0 > 0$ such that problem \eqref{pro1}
 does not have any non trivial solution if $0 < \nu < \nu_0.$
\end{enumerate}
\end{itemize}
\end{thm}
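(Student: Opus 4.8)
### Proof strategy

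The plan is to set up the energy functional
$$\Psi_\nu(u) = \int_\Omega \Gamma\!\Big(\frac{u^2+|\nabla u|^2}{2}\Big)\,dx - \nu\int_\Omega G(u)\,dx - \int_\Omega hu\,dx$$
on $H_0^1(\Omega)$, and to produce the desired solutions as critical points of $\Psi_\nu$ obtained by \emph{global minimization}. Because of $(q1)$, the quasilinear term is comparable to $\tfrac12\|u\|_{H_0^1}^2$ (precisely, $\gamma_{\min}\,\tfrac{u^2+|\nabla u|^2}{2}\le \Gamma(\tfrac{u^2+|\nabla u|^2}{2})\le \gamma_{\max}\,\tfrac{u^2+|\nabla u|^2}{2}$ after integrating $\Gamma(r)=\int_0^r\gamma$), so the principal part controls an equivalent norm on $H_0^1$. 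The sublinear hypothesis $(g1)$, $g(t)=o(t)$, forces $G(t)=o(t^2)$ at infinity, hence for every $\varepsilon>0$ there is $C_\varepsilon$ with $G(t)\le \varepsilon t^2 + C_\varepsilon$; combined with the $L^2$-boundedness of $h$, this gives coercivity of $\Psi_\nu$ on $H_0^1(\Omega)$ for every $\nu>0$: $\Psi_\nu(u)\ge (\tfrac{\gamma_{\min}}{2}-\nu\varepsilon C_P)\|u\|^2 - C$ once $\varepsilon$ is chosen small. Weak lower semicontinuity comes from convexity: $(q2)$ says $t\mapsto\Gamma(t^2)$ is convex, which (as noted in the Remark via $g(t)=\Gamma(t^2)$) is exactly the convexity of $F(z,p)=\Gamma(\tfrac12(z^2+|p|^2))$ in $p$, so by standard results (e.g. Giaquinta–Giusti, or Struwe) the functional $u\mapsto\int_\Omega\Gamma(\tfrac{u^2+|\nabla u|^2}{2})$ is sequentially weakly l.s.c.\ on $H_0^1$; the remaining terms are weakly continuous since $g$ has at most linear growth by $(g3)$ (or, for part (i) and the qualitative statements, one argues with the compact embedding $H_0^1\hookrightarrow L^2$ and Strauss-type lemma using only $(g1)$). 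Therefore $\Psi_\nu$ attains a global minimum $u_\nu$, which is a weak solution.

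Next I would reduce to non-negative solutions and show the minimizer is nontrivial. Since we may assume $g\equiv 0$ on $(-\infty,0]$ and $h\ge0$, replacing $u$ by $u^+$ does not increase $\Psi_\nu$ (the gradient term obeys $|\nabla u^+|\le|\nabla u|$ a.e.\ and $\Gamma(\cdot^2)$ is increasing), so the minimizer may be taken $\ge 0$; a weak maximum principle / testing with $u^-$ then confirms $u_\nu\ge0$ directly. For \textbf{(i)} with $h\gneqq0$: evaluate $\Psi_\nu$ at $t\varphi$ for a fixed nonnegative $\varphi$ with $\int_\Omega h\varphi>0$; as $t\to0^+$ the quadratic principal part is $O(t^2)$, the $G$-term is $o(t^2)$ or lower order, but $-t\int h\varphi$ is linear and negative, so $\Psi_\nu(t\varphi)<0$ for small $t>0$, hence $\min\Psi_\nu<0=\Psi_\nu(0)$ and $u_\nu\not\equiv0$. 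For \textbf{(ii)(a)} with $h\equiv0$: by $(g2)$ pick $t_0$ with $G(t_0)>0$ and a test function $\phi$ (for instance $\phi\equiv t_0$ on a large interior subdomain, cut off near $\partial\Omega$); then $\Psi_\nu(\phi)\le A - \nu B$ with $B=\int_\Omega G(\phi)>0$ (using $G(s)\ge0$ can be arranged, or control the cutoff region), so for $\nu>\nu_1:=A/B$ we get $\Psi_\nu(\phi)<0$ and again the minimizer is nontrivial.

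Finally, \textbf{(ii)(b)}, the nonexistence statement, is proved by an \emph{a priori} estimate rather than variationally. Suppose $u\not\equiv0$ solves \eqref{pro1} with $h\equiv0$; test the equation with $u$ itself:
$$\int_\Omega \gamma\!\Big(\tfrac{u^2+|\nabla u|^2}{2}\Big)\big(u^2+|\nabla u|^2\big)\,dx = \nu\int_\Omega g(u)u\,dx.$$
The left side is $\ge \gamma_{\min}\,(\|\nabla u\|_2^2+\|u\|_2^2)\ge \gamma_{\min}(1+\lambda_1^{-1})^{-1}\cdot\!$ (an equivalent $H_0^1$ norm), more simply $\ge\gamma_{\min}\|\nabla u\|_2^2$; the right side is $\le \nu C\|u\|_2^2\le \nu C\lambda_1^{-1}\|\nabla u\|_2^2$ by $(g3)$ and Poincaré. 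Hence $\gamma_{\min}\|\nabla u\|_2^2\le \nu C\lambda_1^{-1}\|\nabla u\|_2^2$, which is impossible once $\nu<\nu_0:=\gamma_{\min}\lambda_1/C$ unless $\|\nabla u\|_2=0$, i.e.\ $u\equiv0$.

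The main obstacle I anticipate is the rigorous justification of the weak lower semicontinuity and, more delicately, the passage to the limit showing the minimizer actually solves the Euler–Lagrange equation in weak form: the operator $S$ is quasilinear and nonhomogeneous, so one must argue that along a minimizing sequence $u_n\weakto u_\nu$ one has $\gamma(\tfrac{u_n^2+|\nabla u_n|^2}{2})\nabla u_n \weakto \gamma(\tfrac{u_\nu^2+|\nabla u_\nu|^2}{2})\nabla u_\nu$ in $L^2$ — this requires either a Minty-type monotonicity argument (available because $t\mapsto\Gamma(t^2)$ convex makes $S$ monotone) or strong $H_0^1$-convergence of the minimizing sequence (which does follow from convexity plus l.s.c.\ via the standard "$\liminf$ of a strictly convex functional" trick). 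Everything else — coercivity, reduction to $u\ge0$, the sign computations for nontriviality — is routine given $(q1)$, $(q2)$, $(g1)$–$(g3)$ and the compact Sobolev embedding.
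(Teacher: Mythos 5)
Your proposal is correct and follows essentially the same route as the paper: global minimization of the coercive, weakly lower semicontinuous energy functional, with nontriviality obtained from negative-energy test functions (using $(g2)$ and largeness of $\nu$ when $h\equiv 0$) and nonexistence obtained by testing the equation with $u$ under $(g3)$. The only cosmetic differences are that the paper enforces non-negativity through the auxiliary problem with $f(u_+)$ rather than by truncating the minimizer, and its nonexistence estimate keeps the zeroth-order term $\gamma_{\text{min}}\|u\|_2^2$ instead of invoking Poincar\'e; also, since $\mathcal{E}\in C^1$ the global minimizer is automatically a critical point, so the Minty-type argument you worry about is not needed.
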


 In particular, Theorem \ref{thm1} asserts that if $h \equiv 0$ and if the nonlinear term $g$ in the right-hand side of problem  \eqref{pro1} has a sublinear decay at $ +\infty$ and at most a linear growth near the origin, then the parameter $\nu >0$ must be large enough in order to guarantee the existence of solutions. This corresponds to high perturbations of the reaction. \smallskip

Returning to the semilinear case, which corresponds to $\gamma(0)=\gamma(\infty)$, Theorem \ref{thm1} asserts that, provided that $g$ satisfies assumptions $(g1)-(g3)$, then the following Dirichlet problem
\begin{center}

$\left\{
\begin{tabular}{lr}
$-\Delta u + u = \nu g(u) $& in $\Omega$\\
$\qquad u = 0$&on $\partial\Omega$
\end{tabular}
\right.$
\end{center}

\noindent does not have nontrivial solutions for $\nu > 0$ small enough, but admits non-negative solutions for $\nu$ sufficiently large. For example, Theorem \ref{thm1} shows that the problem

\begin{center}

$\left\{
\begin{tabular}{lr}
$-\Delta u + u = \nu\text{log}(1+\lvert u \rvert) $& in $\Omega$\\
$\qquad u = 0$&on $\partial\Omega,$
\end{tabular}
\right.$

\end{center}

\noindent has only the trivial solution if $\nu>0$ is small enough and that nontrivial solutions do exist as soon as $\nu>0$ is sufficiently large. By contrast, if $h \gneqq 0$ (and $h \in L^2\left(\Omega\right)$), then the nonlinear problem

\begin{center}

$\left\{
\begin{tabular}{lr}
$-\Delta u + u = \nu\text{log}(1+\lvert u \rvert) + h$& in $\Omega$\\
$\qquad u = 0$&on $\partial\Omega,$
\end{tabular}
\right.$

\end{center}

\noindent has a non-negative solution for all $\nu > 0$.

\subsection{Linear growth case}

Here we extend the main results obtained in C.A.~Stuart \cite{stuart}, which are concerned with the linear case $f(u)= \lambda u $ in problem \eqref{pro1}.  \medskip

Concerning the operator $S$ we need, with respect to the sublinear growth case, to strengthen our assumptions $(q1)$ and $(q2)$ by requiring the following conditions.

\begin{itemize}
\item
 \textbf{(q3)} The function $\gamma : [0, \infty) \to \RR$  is continuous and there exist some constants $0 < \gamma_{\text{min}} \leq \gamma_{\text{max}}$ such that
$\forall t \in [0, \infty),$ $ \gamma_{\text{min}} \leq \gamma(t) \leq \gamma_{\text{max}}.$ In addition, the exists $\gamma(\infty) >0$ such that $\gamma(s) \underset{s \rightarrow \infty}{\longrightarrow} \gamma(\infty).$
\item
 \textbf{(q4)} The function $t \mapsto \Gamma(t^2)$ is strictly convex on $[0, \infty).$

%\begin{enumerate}
%\item[a)] $\exists \gamma(\infty) > 0,\; \gamma(s) \underset{s \rightarrow +\infty}{\longrightarrow} \gamma(\infty)$,
%\item[b)] $\forall s \in [0,\infty),\; \gamma(s) \geq \gamma(\infty),$
%\item[c)] $\gamma(0) > \gamma(\infty).$
%\end{enumerate}
\end{itemize}

\begin{remark}\label{equivalent1}
The condition $(q4)$ is rather standard in the literature dealing with related quasilinear problems \cite{CaGoSi, SiCaSiGo, Ke}. Note that it is equivalent to require that the function $t \mapsto t \gamma(\frac{t^2}{2})$ is strictly increasing on $[0, \infty)$.
\end{remark}

We consider a large class of nonlinearities with linear growth and satisfying natural hypotheses, in strong relationship with the first eigenvalue $\lambda_1$ of the Laplace operator $(-\Delta)$ in $H^1_0(\Omega)$. Depending of the results seek we shall ask the continuous function $f:[0, \infty) \ri\RR$ to satisfy some of the following conditions:

\begin{itemize}
\item \textbf{(f1)}  $\displaystyle\limsup_{t \rightarrow 0}\frac{f(t)}{t} < \gamma(0) + \gamma_{\text{min}}\, \lambda_1,$
\item \textbf{(f2)} $\displaystyle\limsup_{t \rightarrow + \infty}\frac{f(t)}{t^p} =0,$ for some $p \in (1, 2^*-1)$,
\item \textbf{(f3)} $\displaystyle\liminf_{t \rightarrow +\infty}\frac{f(t)}{t} > \gamma(\infty)(1 + \lambda_1)$,
\item \textbf{(f4)} $\displaystyle\limsup_{t \rightarrow +\infty}\frac{f(t)}{t} < +\infty.$
\end{itemize}

\begin{thm}\label{thm2} Assume that the conditions $(q3)$ hold and let $h \in L^2(\Omega)$, $h \geq 0$. Then the following properties hold.
\begin{itemize}
\item[(i)] If $(f1)-(f2)$ and $(q2)$ holds, then for $||h||_2$ sufficiently small, problem \eqref{pro1} admits a least one non-negative solution. In addition, this solution is nontrivial if $h \not \equiv 0$.
\item[(ii)] If $(f1)-(f3)-(f4)$ and $(q4)$ hold, then for $||h||_2$ sufficiently small, problem \eqref{pro1} admits at least two nontrivial non-negative solutions if $h \not\equiv 0$  and at least a nontrivial non-negative solution if $h \equiv 0$.
\end{itemize}
\end{thm}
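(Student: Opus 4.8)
The natural framework is the variational one already indicated in the introduction: solutions of \eqref{pro1} with $f(u)$ in place of $\lambda u+h$ are critical points of
$$\Psi(u) = \int_\Omega \Gamma\Big(\tfrac{u^2+|\nabla u|^2}{2}\Big)\,dx - \int_\Omega F(u)\,dx - \int_\Omega h u\,dx$$
on $H^1_0(\Omega)$, where $F(t)=\int_0^t f(s)\,ds$ (extended so that $f(t)=0$ for $t\le 0$, which forces nonnegativity of critical points by testing against $u^-$ and using (q3)). The plan is to carry out, in order: (1) a verification that $\Psi\in C^1(H^1_0(\Omega),\RR)$ with the expected derivative, the only delicate point being that the nonhomogeneous operator $u\mapsto -\mathrm{div}[\gamma(\cdot)\nabla u]+\gamma(\cdot)u$ is well-defined and continuous from $H^1_0$ to $H^{-1}$ thanks to the two-sided bound in (q3); (2) for part (i), show that under (f1)--(f2) and the convexity (q2), $\Psi$ is coercive and weakly lower semicontinuous, hence attains a global minimum; (3) for part (ii), establish a mountain-pass geometry for $\Psi$ and prove a Palais--Smale (or Cerami) condition, which is the crux.

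For part (i): coercivity follows from (q3), which gives $\Gamma(\tfrac{u^2+|\nabla u|^2}{2}) \ge \gamma_{\min}\tfrac{u^2+|\nabla u|^2}{2}$, so the leading term dominates $\tfrac{\gamma_{\min}}{2}\|u\|_{H^1_0}^2$; meanwhile (f1) controls $F$ near $0$ strictly below the relevant linear threshold and (f2) gives a subcritical power bound $|F(t)|\le \eps t^2 + C_\eps |t|^{p+1}$ at infinity, so by Sobolev embedding $\int_\Omega F(u)$ is absorbed with room to spare, and $\int_\Omega hu$ is controlled by $\|h\|_2\|u\|_2$. Weak lower semicontinuity of $u\mapsto\int_\Omega\Gamma(\tfrac{u^2+|\nabla u|^2}{2})$ is where (q2) (convexity of $t\mapsto\Gamma(t^2)$, equivalently convexity of $F(z,p)=\Gamma(\tfrac12[z^2+|p|^2])$ in $(z,p)$, as noted in the Remark) enters: a standard convexity + Fatou argument (or Ioffe's theorem) gives lower semicontinuity along weakly convergent sequences, while $u\mapsto\int_\Omega F(u)$ and $u\mapsto\int_\Omega hu$ are weakly continuous by compact Sobolev embedding. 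Hence a minimizer exists; it is nontrivial when $h\not\equiv 0$ because $\Psi(tu)<0$ for small $t>0$ and a suitable test function (using $\int hu>0$).

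For part (ii): the strict convexity (q4) (equivalently, $t\mapsto t\gamma(t^2/2)$ strictly increasing, per Remark \ref{equivalent1}) will be used both to get a first solution as a local minimizer near $0$ and to handle the weak-limit passage in the Palais--Smale analysis. The geometry is the familiar one when $h$ is small: (f1) gives $\Psi(u)\ge \alpha>0$ on a small sphere $\|u\|=r$ (the constant $\gamma(0)+\gamma_{\min}\lambda_1$ in (f1) is exactly what makes the quadratic part of $\Psi$ positive-definite on $H^1_0$ near the origin after using the Poincaré inequality with constant $\lambda_1$), while (f3) forces $F(t)\gtrsim \tfrac12\gamma(\infty)(1+\lambda_1)t^2$ for large $t$, so evaluating $\Psi$ along $t\varphi_1$ (with $\varphi_1$ the first eigenfunction) and using $\Gamma(s)\le \gamma(\infty)s + C$ from (q3) gives $\Psi(t\varphi_1)\to-\infty$. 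One then obtains a Cerami sequence $(u_n)$ at the mountain-pass level. The main obstacle — as flagged in the introduction — is proving that $(u_n)$ is bounded: because $\Gamma$ is (essentially) linearly bounded and $f$ has linear growth by (f4), the usual Ambrosetti--Rabinowitz trick is unavailable, so one must argue by contradiction, normalize $v_n=u_n/\|u_n\|$, extract a weak limit $v$, and use the Cerami condition together with the monotonicity from (q4) to show that $v$ solves a limiting linear eigenvalue-type problem $-\mathrm{div}[\gamma(\infty)\nabla v]+\gamma(\infty)v = m(x)v$ with $m(x)=\lim f(u_n)/u_n \ge \gamma(\infty)(1+\lambda_1)$ on $\{v\ne 0\}$; the strict inequality in (f3) (compared against $\gamma(\infty)(1+\lambda_1)$, which is the first eigenvalue of that limiting operator) then contradicts the variational characterization of $\lambda_1$ — either $v\equiv 0$, which is excluded by showing $\|v\|$ cannot vanish (again using (q4) to pass to the limit in the nonhomogeneous operator applied to $v_n$, a step that is nonstandard precisely because the operator is nonhomogeneous, so $S(u_n)/\|u_n\| \not\to S(v)$ automatically and one must exploit monotonicity and the pointwise limit $\gamma(\cdot)\to\gamma(\infty)$ on the diverging set), or $v\not\equiv 0$ and we reach the eigenvalue contradiction. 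Once boundedness is in hand, the compactness of the Palais--Smale sequence follows from the strict monotonicity of $u\mapsto S(u)$ induced by (q4) (an $(S_+)$-type argument): $u_n\rightharpoonup u$, $\langle S(u_n)-S(u),u_n-u\rangle\to 0$ together with strict monotonicity forces $u_n\to u$ strongly. Finally the mountain-pass critical point is nontrivial and distinct from the local minimizer by the usual level comparison, and it is nonnegative by the sign restriction on $f$; when $h\equiv 0$ the local-minimizer argument degenerates but the mountain-pass solution persists, giving the one-solution statement.
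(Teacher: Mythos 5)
There is a genuine gap in your part (i). Under $(f1)$--$(f2)$ the functional $\Psi$ is \emph{not} coercive: $(f2)$ only caps $f$ by $o(t^p)$ for some $p\in(1,2^*-1)$, so $F(t)$ may grow like $t^{p+1}$ with $p+1>2$, which cannot be absorbed by the quadratic upper bound $\Gamma(s)\le\gamma_{\max}s$ on the leading term; indeed the functional must go to $-\infty$ along some rays for the mountain pass geometry of part (ii) to exist. Your proposed global minimization therefore fails. The paper instead proves (Lemma \ref{localminstructure}) that $\mce\ge\alpha>0$ on a small sphere $\|u\|=r$ when $\|h\|_2$ is small, and then minimizes over the closed ball $\overline{B(0,r)}$ using weak sequential lower semicontinuity; since $\mce(0)=0<\alpha$, the minimum is attained in the interior and is a critical point. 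Your coercivity paragraph must be replaced by this local argument.

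For part (ii) your overall shape is reasonable but you elide the two steps the paper identifies as the actual difficulties. First, you posit a Cerami sequence and assert its boundedness by a normalization argument; the paper deliberately avoids working with an arbitrary Cerami sequence and instead uses the monotonicity trick of \cite{Je} (Theorem \ref{Jeanjean}): it produces exact critical points $u_n$ of perturbed functionals $\mathcal{E}_{\mu_n}$ with $\mu_n\uparrow 1$, for which the identity $\mathcal{E}'_{\mu_n}(u_n)=0$ can be tested cleanly against $u_n$ and against $\varphi_1/\|u_n\|_2$; the passage to the limit in $\int_\Omega\gamma\bigl(\tfrac{u_n^2+|\nabla u_n|^2}{2}\bigr)(v_n\varphi_1+\nabla v_n\cdot\nabla\varphi_1)\,dx\to\gamma(\infty)\int_\Omega(v\varphi_1+\nabla v\cdot\nabla\varphi_1)\,dx$, which you only gesture at, is a separate nontrivial estimate (Proposition \ref{lappendice}) relying on $\gamma(s)\to\gamma(\infty)$ and a splitting of $\Omega$, not on $(q4)$. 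Second, your one-line claim that strict monotonicity of $\beta(\xi)=\gamma(|\xi|^2/2)\xi$ plus $\langle S(u_n)-S(u),u_n-u\rangle\to0$ ``forces'' strong convergence is not automatic: $(q4)$ gives only strict, not uniform, monotonicity, so one only gets a.e.\ convergence of $(u_n,\nabla u_n)$ (via Landes' lemma, Proposition \ref{LEM1} and Lemma \ref{LEM2}); the upgrade to strong $H^1_0$ convergence requires the additional convexity inequality $\limsup_n\Phi(u_n)\le\Phi(u)$ combined with the Brezis--Lieb lemma (Lemma \ref{Brezis-Lieb}). This is precisely the point where the paper replaces Stuart's hypothesis $(s2)$ by the weaker $(q4)$, and it needs to be argued, not asserted.
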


Clearly, the main challenge is to establish Theorem \ref{thm2} (ii), namely the existence of two non-negative solutions. As in \cite{stuart}, a first solution will correspond to a local minima of an associated functional and the second one will lie at its mountain pass level. It is well known that a mountain pass geometry implies the existence of a Palais-Smale sequence $(u_n)_{n \in \N}$ at the mountain pass level. To obtain a critical point it then suffices to show that this Palais-Smale sequence is, up to a subsequence, converging. In that direction a first difficulty is to show that $(u_n)_{n \in \N}$ is bounded. To derive the existence of this bounded Palais-Smale sequence, the paper \cite{stuart} relies on a previous work of the author \cite{St}, which can be seen as an alternative version of some results in \cite{Gh}. It relies on the so-called notion of {\it localizing the Palais-Smale sequence}.
%In addition to the boundedness of the sequence this {\it localization} provides the non-negativity of the associated weak limit which is a key point to obtain the boundedness of the sequence.
\smallskip

In order to overcome this difficulty, we follow in the present work a different strategy. On the one hand, we introduce a different choice of the functional in order to ensure, from the beginning, that a non-negative critical point will be obtained. On the other hand, we make use of the approach developed in \cite{Je} to obtain a bounded Palais-Smale sequence. With respect to \cite{stuart} this approach permits to work with $(q3)$ instead of $(s1)$ and without having to require $(s3)$. \smallskip

Having obtained a bounded Palais-Smale sequence of the mountain pass level it remains to prove its convergence, up to a subsequence. It is on that point that requiring $(s2)$ instead of  $(q4)$ proved to be necessary in the approach developed in \cite{stuart}. Here, combining recent developments \cite{SiCaSiGo, CaGoSi} as well as classical results due to Brezis and Lieb \cite{BrLi}, we manage to show that only hypothesis $(q4)$ is needed. \smallskip

This being said, we acknowledge that our proofs strongly rely, at several key points, on elements first established in \cite{stuart}. \smallskip

Lastly, we point out that our assumption $(g3)$, which implies that the function $\gamma(t)$ lies between two positive constants, forbid to consider certain classes of operators.  For example the $p$-Laplacian or more generally $\Phi$-Laplacian operators as defined in \cite{SiCaSiGo}, see also \cite{Ke}. However, to the best of our knowledge, our problem does not fall into a well-defined category. In particular the fact that the term $u^2$ is present inside the function $\gamma(t)$ seems to be new, apart from \cite{St} obviously, and likely some elements developed in this paper will prove useful to consider versions of problem \eqref{pro1} set on the whole space $\R^N$.

\subsubsection*{Further comments about main hypotheses} A basic example of potentials that fulfill all hypotheses $(q1)$--$(q4)$ in the statement of Theorems \ref{thm1} and \ref{thm2} is given by
$$\Gamma (t)=At+B[(1+t)^{p/2}-1],$$ where $A$, $B$ are positive numbers and $1<p<2$. In this case, the energy functional associated to problem \eqref{pro1} is a {\it double-phase} variational integral, according to the terminology of P.~Marcellini and G.~Mingione. This corresponds to quantities of the type
\begin{equation}\label{model}\int_\Omega (|\nabla u|^p+a(x)|\nabla u|^2)dx\ \  (1<p<2)\end{equation}
in the expression of the energy defined by \eqref{energy}, where $a(x)$ is a nonnegative potential. Functionals of this type have been studied for the first time by P.~Marcellini \cite{marce1,marce2}, in relationship with patterns arising in nonlinear elasticity, see J.~Ball \cite{ball1, ball2}. More precisely, Marcellini studied variational integrals of the type $\int_\Omega f(x, \nabla u)$dx, where $f=f(x,\xi)$ is a function with {\it unbalanced growth} satisfying
$$c_1\,|\xi|^p\leq |f(x,\xi)|\leq c_2\,(1+|\xi|^2)\quad\mbox{for all}\ (x,u)\in\Omega\times\RR^N,$$
for some positive constants $c_1$ and $c_2$. The study of non-autonomous functionals characterized by the fact that the energy density changes its ellipticity and growth properties according to the point has been continued in a series of remarkable papers by G.~Mingione {\it et al.} \cite{beck,2Bar-Col-Min, 9Col-Min}. These contributions are in relationship with the papers of V.~Zhikov \cite{23Zhikov,24Zhikov}, which describe the
behavior of phenomena arising in nonlinear
elasticity.
In fact, Zhikov intended to provide models for strongly anisotropic materials in the context of homogenisation and Lavrentiev-type phenomena.
In particular, he initiated the qualitative analysis of the energy functional defined in \eqref{model},
where the modulating coefficient $a(x)$ dictates the geometry of the composite made of
two differential materials, with hardening exponents $p$ and $2$, respectively.

  \medskip
\begin{notation}
For $N \geq 1, 1\leq p<\infty,$ $L^p(\Omega)$ is the usual Lebesgue space with norm
$||u||_p^p := \int_{\Omega}|u|^p \,dx.$
The Sobolev space $H^1_0(\Omega)$ is endowed with its equivalent norm $||u||^2 = \int_{\Omega} |\nabla u|^2 \, dx.$
We denote by $'\rightarrow'$, respectively by $'\rightharpoonup'$, the strong convergence, respectively the  weak convergence in corresponding space. Finally, for a function $ u \in H_0^1(\Omega)$, we recall that $u_+ = \max \{u, 0\}$ and $u_- = - \min \{u, 0 \}.$
\end{notation}

\section{Preliminaries}

We recall that  $u_0 \in H^1_0\left(\Omega\right)$ is a weak solution to problem \eqref{pro1}  if
$$\displaystyle\int_{\Omega} \gamma\left(\frac{u_0^2 + \lvert \nabla u_0 \rvert^2}{2}\right)\left(u_0v + \nabla u_0 . \nabla v\right) \, dx = \displaystyle\int_{\Omega}f(u_0)v\, dx + \displaystyle\int_{\Omega} hv \, dx \quad \forall v \in H_0^1\left(\Omega\right).$$

\noindent Thereafter, we shall often consider the following auxiliary problem
\begin{equation}\label{pro2}
\left\{  \begin{array}{ll}
  \displaystyle  -\mbox{div}\,\left[\gop\nabla u\right]+\gop u=  f(u_+)+h &\mbox{in $\Omega$} \\
    u= 0 &\mbox{on $\partial\Omega$}.
    \end{array}\right.
\end{equation}
\begin{lem}\label{newproblem}
Assume that $\gamma \geq \gamma_0 >0$ on $[0, \infty)$ and $h \geq 0$. Then any solution $u \in H^1_0(\Omega)$ to problem \eqref{pro2} is non-negative. In particular,
$u \in H^1_0(\Omega)$  is solution to problem \eqref{pro1}.
\end{lem}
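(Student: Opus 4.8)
The plan is to test the weak formulation of \eqref{pro2} with the negative part $v=u_-=-\min\{u,0\}\in H^1_0(\Omega)$ of a given solution $u$, and to deduce $u_-\equiv 0$ from the strict positivity of $\gamma$ together with the sign conditions $h\ge 0$ and $f(0)\ge 0$ (in the settings considered here $f(0)=0$; this is clear in the sublinear case, where $g\equiv 0$ on $(-\infty,0]$).

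First I would recall the standard calculus rules for the positive and negative parts of functions in $H^1_0(\Omega)$: one has $u_\pm\in H^1_0(\Omega)$ and, a.e.\ in $\Omega$,
$$u\,u_-=-u_-^2,\qquad \nabla u\cdot\nabla u_-=-|\nabla u_-|^2,$$
since $u_-$ and $\nabla u_-$ vanish a.e.\ on $\{u\ge 0\}$ while $u_-=-u$ on $\{u<0\}$. Substituting $v=u_-$ into the weak formulation of \eqref{pro2} and using these identities, the left-hand side becomes
$$-\int_\Omega \gop\,\big(u_-^2+|\nabla u_-|^2\big)\,dx\ \le\ -\gamma_0\int_\Omega\big(u_-^2+|\nabla u_-|^2\big)\,dx ,$$
by the lower bound $\gamma\ge\gamma_0>0$. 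On the right-hand side, $u_+=0$ a.e.\ on $\{u_->0\}$, so $f(u_+)\,u_-=f(0)\,u_-$ a.e., and hence, using $u_-\ge 0$, $h\ge 0$ and $f(0)\ge 0$,
$$\int_\Omega f(u_+)\,u_-\,dx+\int_\Omega h\,u_-\,dx\ =\ f(0)\int_\Omega u_-\,dx+\int_\Omega h\,u_-\,dx\ \ge\ 0 .$$
Comparing these two displays forces $\int_\Omega\big(u_-^2+|\nabla u_-|^2\big)\,dx=0$, i.e.\ $u_-\equiv 0$ and $u\ge 0$ a.e.\ in $\Omega$. For the last assertion it then suffices to note that $u\ge 0$ gives $u_+=u$, so that $f(u_+)=f(u)$ and the weak formulation of \eqref{pro2} is literally that of \eqref{pro1}; thus $u$ is a non-negative weak solution of \eqref{pro1}.

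I do not anticipate a genuine obstacle here: this is a routine test-function argument. The only points deserving some care are the justification of the chain-rule identities for $u_\pm$ in $H^1_0(\Omega)$ and the observation that the reaction term contributes with the correct sign, which hinges precisely on $u_+$ vanishing on $\{u_->0\}$ — so that only the harmless value $f(0)$ enters — together with $f(0)\ge 0$.
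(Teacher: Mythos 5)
Your proof is correct and follows essentially the same route as the paper: test the weak formulation with $v=u_-$, use $uu_-=-u_-^2$ and $\nabla u\cdot\nabla u_-=-|\nabla u_-|^2$ together with $\gamma\ge\gamma_0>0$ and $h\ge 0$ to force $u_-\equiv 0$. The only minor difference is that you track the reaction term explicitly as $f(0)\int_\Omega u_-\,dx$ and note the need for $f(0)\ge 0$, whereas the paper simply asserts $\int_\Omega f(u_+)u_-\,dx=0$ (implicitly using $f(0)=0$, which holds in the settings considered); your version is, if anything, slightly more careful.
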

\begin{proof}
 Let $u \in H^1_0(\Omega)$ be a solution to problem \eqref{pro2}. Multiplying \eqref{pro2} by $u_-$ and integrating we obtain
$$\displaystyle\int_{\Omega} \gamma\left(\frac{u^2 + \lvert \nabla u \rvert^2}{2}\right)\left(u u_- + \nabla u  \nabla u_-\right) \, dx = \displaystyle\int_{\Omega}f(u_+)u_-\, dx + \displaystyle\int_{\Omega} h u_- \, dx.$$
\noindent Therefore, using that $\displaystyle\int_{\Omega} h u_-\, dx \geq 0$ (since $h \geq 0$) and $\displaystyle\int_{\Omega} f(u_+) u_-\, dx = 0$, we obtain that
$$-\displaystyle\int_{\Omega} \gamma\left(\frac{u^2 + \lvert \nabla u \rvert^2}{2}\right)\left(\lvert u_- \rvert^2 + \lvert\nabla u_-\right \rvert^2) \, dx \geq 0.$$
It follows that $u_- = 0$, hence $u \geq 0$ is a solution to problem \eqref{pro1}.
\end{proof}
Associated to \eqref{pro2} we introduce the functional
\begin{equation}\label{energy}\mathcal{E}(u) :=  \displaystyle\int_{\Omega} \Gamma\left(\frac{u^2 + \lvert \nabla u \rvert^2}{2}\right) \, dx - \displaystyle\int_{\Omega}F(u_+)\, dx - \displaystyle\int_{\Omega} hu\, dx \quad u \in H_0^1\left(\Omega\right)\end{equation}
where $F(t):=\displaystyle\int_0^t f(s)\, ds$. For later use we also introduce, for $v \in H^1_0(\Omega)$,
$$ \Phi(v):= \int_{\Omega} \Gamma \Big( \frac{v^2 + |\nabla v |^2}{2}\Big) \, dx.$$
\begin{lem}\label{semi}
Assume that $f : [0, \infty) \to \RR$ is continuous and satisfies $(f2)$. Then the following properties hold,
\begin{enumerate}
\item[(i)] Assume that $\gamma \in L^{\infty}(\Omega)$. Then  $\mathcal{E} \in C^1(H^1_0(\Omega))$ with
\begin{equation}\label{derivee}
 \mathcal{E}'(u)v = \displaystyle\int_{\Omega} \gamma\left(\frac{u^2 + \lvert \nabla u \rvert^2}{2}\right)\left(uv + \nabla u  \nabla v\right) \; dx - \displaystyle\int_{\Omega}f(u_+)v\; dx - \displaystyle\int_{\Omega} hv \, dx.
\end{equation}
\item[(ii)] Moreover, if $(q2)$ is satisfied, then $\mathcal{E}$ is weakly sequentially lower semicontinuous.
\end{enumerate}
\end{lem}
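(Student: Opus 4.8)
The plan is to prove the two parts of Lemma \ref{semi} separately, since (i) is a routine differentiability computation and (ii) contains the real content.

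\smallskip
\textbf{Part (i).} The plan is to write $\mathcal{E} = \Phi - \Psi_1 - \Psi_2$, where $\Psi_1(u) = \int_\Omega F(u_+)\, dx$ and $\Psi_2(u) = \int_\Omega hu\, dx$. The term $\Psi_2$ is linear and continuous on $H^1_0(\Omega)$ since $h \in L^2(\Omega)$, so it is trivially $C^1$. For $\Psi_1$, the plan is to use the growth condition $(f2)$: together with continuity of $f$ it gives a bound $|f(t)| \leq a + b|t|^p$ for all $t \geq 0$ with $p < 2^*-1$, hence $|F(t)| \leq a|t| + \frac{b}{p+1}|t|^{p+1}$ with $p+1 < 2^*$. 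Then the Sobolev embedding $H^1_0(\Omega) \hookrightarrow L^{p+1}(\Omega)$ and the standard Nemytskii-operator argument (continuity and differentiability of superposition operators between Lebesgue spaces under subcritical polynomial growth) yield that $\Psi_1 \in C^1(H^1_0(\Omega))$ with $\Psi_1'(u)v = \int_\Omega f(u_+)v\, dx$; the presence of $u_+$ is harmless because $t \mapsto t_+$ is Lipschitz and $f(u_+) = f(u_+)$ is still a subcritical Nemytskii operator. For $\Phi$, I would differentiate under the integral sign: the integrand is $\Gamma\big(\tfrac12(u^2+|\nabla u|^2)\big)$ with $\Gamma' = \gamma$ bounded by $(q1)$, so the Gateaux derivative is $\int_\Omega \gamma\big(\tfrac12(u^2+|\nabla u|^2)\big)(uv + \nabla u\cdot\nabla v)\, dx$; this is finite since $\gamma \leq \gamma_{\max}$, and one checks continuity of $u \mapsto \Phi'(u)$ in the operator norm using the boundedness and continuity of $\gamma$ together with $u_n \to u$ in $H^1_0$ implying (along a subsequence) pointwise convergence of $u_n, \nabla u_n$ and domination, so that $\gamma\big(\tfrac12(u_n^2+|\nabla u_n|^2)\big) \to \gamma\big(\tfrac12(u^2+|\nabla u|^2)\big)$ in every $L^q$, $q<\infty$; combined with Gateaux + continuity this gives $\Phi \in C^1$.

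\smallskip
\textbf{Part (ii).} Here the plan is to show that under $(q2)$ the functional $\mathcal{E}$ is weakly sequentially lower semicontinuous. Let $u_n \weakto u$ in $H^1_0(\Omega)$. The lower-order terms are easy: $\int_\Omega hu_n\, dx \to \int_\Omega hu\, dx$ since $h \in L^2 \subset (H^1_0)'$, and $\int_\Omega F((u_n)_+)\, dx \to \int_\Omega F(u_+)\, dx$ by compactness of $H^1_0(\Omega) \hookrightarrow L^{p+1}(\Omega)$ (Rellich–Kondrachov, $p+1 < 2^*$) and continuity of the Nemytskii operator associated to $F$ on $L^{p+1}$. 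So it remains to prove $\liminf_n \Phi(u_n) \geq \Phi(u)$. The key observation is that $(q2)$ says $g(t) = \Gamma(t^2)$ is convex on $[0,\infty)$, equivalently (as noted in the Remark after $(s3)$) that $F(z,p) = \Gamma(\tfrac12(z^2+|p|^2))$ is convex in the joint variable $(z,p) \in \RR\times\RR^N$ — indeed $\tfrac12(z^2+|p|^2) = \tfrac12|(z,p)|^2$ and $w \mapsto \Gamma(\tfrac12|w|^2)$ is convex precisely when $t \mapsto \Gamma(t^2)$ is convex. Since $F$ is also continuous and nonnegative, $\Phi(u) = \int_\Omega F(u,\nabla u)\, dx$ is a convex integral functional in $(u, \nabla u)$, and such functionals are weakly lower semicontinuous on $H^1_0(\Omega)$ by the classical theorem of De Giorgi–Ioffe / Tonelli (see e.g. \cite{Gi-Tr} or standard references on the calculus of variations): a convex, continuous, nonnegative integrand is weakly sequentially l.s.c. along $(u_n,\nabla u_n)$ when $u_n \weakto u$ weakly in $W^{1,2}$. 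Alternatively, and perhaps cleanest to present, I would argue directly: by convexity of $F$ in $(z,p)$, for a.e.\ $x$,
\[
F(u_n, \nabla u_n) \geq F(u, \nabla u) + F_z(u,\nabla u)(u_n - u) + F_p(u,\nabla u)\cdot(\nabla u_n - \nabla u),
\]
where $F_z = \gamma(\cdot)\,z$ and $F_p = \gamma(\cdot)\,p$ are in $L^2(\Omega)$ and $L^2(\Omega;\RR^N)$ respectively (using $\gamma \leq \gamma_{\max}$); integrating and using $u_n \weakto u$ in $H^1_0(\Omega)$ — which makes the last two integrals tend to $0$ — gives $\liminf_n \Phi(u_n) \geq \Phi(u)$. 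Adding the three pieces yields $\liminf_n \mathcal{E}(u_n) \geq \mathcal{E}(u)$.

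\smallskip
\textbf{Main obstacle.} The only genuinely delicate point is the weak lower semicontinuity of $\Phi$, and specifically making precise the equivalence "$(q2)$ $\iff$ $F(z,p)$ jointly convex in $(z,p)$" and then invoking (or reproving via the subgradient inequality) the standard weak l.s.c.\ theorem for convex integrands depending on $(u, \nabla u)$ rather than on $\nabla u$ alone. The subgradient-inequality argument sketched above is the safest route because it sidesteps any subtlety about lower-order-term dependence and only uses the weak convergence of both $u_n$ and $\nabla u_n$ in $L^2$, which is exactly what $u_n \weakto u$ in $H^1_0(\Omega)$ provides.
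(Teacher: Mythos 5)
Your argument is correct and is essentially the paper's own: for (i) the authors simply cite Struwe's Theorem C.1 in place of your explicit Nemytskii/dominated-convergence computation, and for (ii) their chain of inequalities \eqref{x}--\eqref{111} is exactly a hands-on verification of the pointwise subgradient inequality you invoke — derived from the one-dimensional convexity of $t\mapsto\Gamma(t^2)$ combined with Cauchy--Schwarz — ending with the same estimate $\Phi(w)\le\Phi(w_n)-\Phi'(w)(w_n-w)$ and the observation that $\Phi'(w)(w_n-w)\to 0$ by weak convergence. The only point worth making explicit in your version is that the equivalence between $(q2)$ and the joint convexity of $(z,p)\mapsto\Gamma\bigl(\tfrac12(z^2+|p|^2)\bigr)$ also uses that $t\mapsto\Gamma(t^2)$ is nondecreasing, which holds here because $\gamma>0$.
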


\begin{proof}
For Part (i) it suffices to use \cite[C.1 Theorem]{Struwe}. Here we only need to use the fact that $\gamma$ is bounded from above.

For Part (ii) we follow closely \cite[Proposition 3.1]{stuart}. Let $w_n \rightharpoonup w$ weakly in $H^1_0(\Omega)$. Because of the compactness of the Sobolev embeddings of $H^1_0(\Omega)$ into $L^q(\Omega)$ for any $q \in [2, 2^*)$ it is enough to prove that
$$\Phi(w) \leq \liminf_{n \to \infty}\Phi(w_n).$$
For $u \in H^1_0(\Omega)$ and $v \in H^1_0(\Omega)$, let $y = (u, \nabla u)$ and $z = (v, \nabla v)$ so that $y \cdot z = uv + \nabla u \cdot \nabla v$. Setting $g(s):= \Gamma(s^2)$ and using $(q2)$ we have
\begin{equation} \label{x}
\begin{aligned}
\Phi(u) - \Phi(v)  = \int_{\Omega} g\Big(\frac{|y|}{\sqrt{2}}\Big) - g\Big(\frac{|z|}{\sqrt{2}}\Big) \, dx
 & \geq \int_{\Omega} g'\Big(\frac{|z|}{\sqrt{2}}\Big)
 \Big(\frac{|y|- |z|}{\sqrt{2}} \Big) \, dx\\
& = \int_{\Omega} \gamma \Big(\frac{|z|^2}{2}\Big)
\sqrt{2}|z| \Big(\frac{|y|- |z|}{\sqrt{2}} \Big) \, dx\\
& =  \int_{\Omega} \gamma \Big(\frac{|z|^2}{2}\Big)
|z| (|y|- |z|) \, dx.
\end{aligned}
\end{equation}
Also, from Part (i),
\begin{equation}\label{103}
\begin{aligned}
\Phi'(v)(u - v) & = \int_{\Omega} \gamma \Big(\frac{|z|^2}{2}\Big)  \big( v(u-v) + \nabla v \cdot \nabla (u -v) \big) \, dx \\
& = \int_{\Omega} \gamma \Big(\frac{|z|^2}{2}\Big) z \cdot (y-z) \,dx.
\end{aligned}
\end{equation}
From \eqref{x} and \eqref{103}, we see that, for all $u \in H^1_0(\Omega)$ and $v \in H^1_0(\Omega)$,
\begin{equation}\label{111}
\Phi(u) - \Phi(v) - \Phi'(v)(u- v)
 \geq \int_{\Omega} \gamma \Big(\frac{|z|^2}{2}\Big) \big(|z|(|y| - |z|) - z \cdot(y-z) \big) \, dx
  \geq 0.
\end{equation}
Choosing $u = w_n$ and $v = w$ in \eqref{111}, this yields
$$ \Phi(w) \leq \Phi(w_n) - \Phi'(w)(w_n - w)$$
where $\Phi'(w)(w_n -w) \to 0$ since $\Phi'(w)$ is in the dual of $H^1_0(\Omega)$ and $w_n \rightharpoonup w$ weakly in $H^1_0(\Omega).$
\end{proof}

To prove the convergence of some Palais-Smale sequences we shall make use of the following three technical results. The first one is a particular case of \cite[Lemma 6]{La} that we recall here for completeness.
\begin{prop}\label{LEM1}
Let $X$ be a finite dimensional real Hilbert space with norm $|\cdot|$ and scalar product $\langle\cdot, \cdot \rangle$. Let $\beta : X \to X$ be a continuous function which is strictly monotone, that is,
\begin{equation}\label{D01}
\langle \beta(\xi) - \beta(\bar{\xi}), \xi-\bar{\xi} \rangle>0, \quad \mbox{for every} \, \,  \xi ,\bar{\xi} \in X \mbox{ with } \xi \neq \bar{\xi}.
\end{equation}
Let $(\xi_n)_{n \in \N} \subset X$ and $\xi \in X$ be such that
$$\lim_{n \to \infty}\langle\beta(\xi_n) - \beta(\xi), \xi_n - \xi\rangle =0.$$
Then $(\xi_n)_{n \in \N}$ converges to $\xi$ in $X$.
\end{prop}

Our next result is a version of already existing related properties, see in particular \cite[Proposition 2.5]{SiCaSiGo}. We thank the authors of \cite{SiCaSiGo} to have indicated to us such results.

\begin{lem}\label{LEM2}
Assume that $(q3)-(q4)$ hold. The assumptions of Proposition \ref{LEM1} are satisfied with the choice $X = \R^{N+1}$ equipped with the standard euclidian scalar product $\langle \cdot, \cdot \rangle $ and $ \beta : \R^{N+1} \to \R^{N+1}$ given by
$$\beta(\xi) = \gamma\Big(\frac{|\xi|^2}{2}\Big)\xi.$$
\end{lem}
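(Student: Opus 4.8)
\textbf{Proof plan for Lemma \ref{LEM2}.}

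The plan is to verify the two requirements of Proposition \ref{LEM1}: that $\beta$ is continuous on $\R^{N+1}$, and that it is strictly monotone in the sense of \eqref{D01}. Continuity is immediate since $\gamma$ is continuous on $[0,\infty)$ by $(q3)$ and $\xi \mapsto |\xi|^2/2$ is continuous, so $\beta(\xi) = \gamma(|\xi|^2/2)\xi$ is a product of continuous maps. The real content is strict monotonicity, and the idea is to reduce it to the one-variable statement in Remark \ref{equivalent1}: under $(q4)$ the scalar function $\phi(r) := r\,\gamma(r^2/2)$ is strictly increasing on $[0,\infty)$. (In fact I would first record this equivalence: writing $g(t)=\Gamma(t^2)$, one has $g'(t)=2t\gamma(t^2)$, so $g$ convex and strictly convex corresponds, after the substitution $t = r/\sqrt2$, to $r\mapsto \phi(r)$ being nondecreasing and strictly increasing respectively; strict convexity of $g$ on $[0,\infty)$ thus gives strict monotonicity of $\phi$.)

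The core computation is to expand, for $\xi \neq \bar\xi$ in $\R^{N+1}$, the quantity
\begin{equation}\label{D02}
\langle \beta(\xi) - \beta(\bar\xi), \xi - \bar\xi\rangle
= \gamma\Big(\tfrac{|\xi|^2}{2}\Big)|\xi|^2 + \gamma\Big(\tfrac{|\bar\xi|^2}{2}\Big)|\bar\xi|^2
- \Big(\gamma\Big(\tfrac{|\xi|^2}{2}\Big) + \gamma\Big(\tfrac{|\bar\xi|^2}{2}\Big)\Big)\langle \xi, \bar\xi\rangle.
\end{equation}
Using Cauchy--Schwarz, $\langle \xi,\bar\xi\rangle \le |\xi||\bar\xi|$, and the fact that the coefficient $\gamma(|\xi|^2/2)+\gamma(|\bar\xi|^2/2)$ is strictly positive by $(q3)$, the right-hand side of \eqref{D02} is bounded below by
$$
\gamma\Big(\tfrac{|\xi|^2}{2}\Big)|\xi|^2 + \gamma\Big(\tfrac{|\bar\xi|^2}{2}\Big)|\bar\xi|^2
- \Big(\gamma\Big(\tfrac{|\xi|^2}{2}\Big) + \gamma\Big(\tfrac{|\bar\xi|^2}{2}\Big)\Big)|\xi||\bar\xi|
= \Big(\phi(|\xi|) - \phi(|\bar\xi|)\Big)\big(|\xi| - |\bar\xi|\big),
$$
where $\phi(r)=r\gamma(r^2/2)$ as above. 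Since $\phi$ is strictly increasing, this last product is $\ge 0$, and it is $>0$ whenever $|\xi| \neq |\bar\xi|$. This settles the case $|\xi|\neq|\bar\xi|$.

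It remains to treat $\xi \neq \bar\xi$ with $|\xi| = |\bar\xi| =: r$. If $r = 0$ then $\xi = \bar\xi$, contradiction, so $r > 0$; then $\gamma(r^2/2) > 0$ and the bound above degenerates to $0$, so I must instead keep the Cauchy--Schwarz inequality strict. Indeed, when $|\xi|=|\bar\xi|=r>0$, \eqref{D02} reads $\gamma(r^2/2)\big(2r^2 - 2\langle\xi,\bar\xi\rangle\big) = \gamma(r^2/2)|\xi-\bar\xi|^2 > 0$ since $\gamma(r^2/2)>0$ and $\xi\neq\bar\xi$. Combining the two cases gives \eqref{D01} for all $\xi \neq \bar\xi$, so Proposition \ref{LEM1} applies. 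The only mildly delicate point is precisely this bookkeeping of when Cauchy--Schwarz is strict versus when the monotonicity of $\phi$ is strict; handling the equal-norm case separately, as above, resolves it cleanly.
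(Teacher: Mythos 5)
Your proof is correct and follows essentially the same route as the paper's: the same split into the cases $|\xi|=|\bar\xi|$ and $|\xi|\neq|\bar\xi|$, the same Cauchy--Schwarz lower bound $\big(\phi(|\xi|)-\phi(|\bar\xi|)\big)\big(|\xi|-|\bar\xi|\big)$ with $\phi(r)=r\gamma(r^2/2)$, and the same appeal to the strict monotonicity of $\phi$ via Remark \ref{equivalent1}. Your only additions are the explicit check of continuity and the sketch of why $(q4)$ yields that monotonicity, both of which the paper leaves implicit.
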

\begin{proof}
Let us split the proof into two parts. In the first part we consider $\xi, \bar{\xi} \in \R^{N+1}$ satisfying $|\xi| = |\bar{\xi}|$. We have that
$$\big\langle \gamma\Big(\frac{|\xi|^2}{2}\Big)\xi - \gamma \Big(\frac{|\bar{\xi}|^2}{2}\Big)\bar{\xi}, \xi - \bar{\xi} \big\rangle = \gamma \Big(\frac{|\xi|^2}{2}\Big)|\xi - \bar{\xi}|^2$$
and thus if $\xi \neq \bar{\xi}$ this quantity is strictly positive. Next, we assume that $|\xi| \neq |\bar{\xi}|$. By the Cauchy-Schwartz inequality  we get
\begin{equation*}
\begin{aligned}
\big\langle \gamma \Big(\frac{|\xi|^2}{2}\Big)\xi -  \gamma \Big(\frac{|\bar{\xi}|^2}{2}\Big)\bar{\xi}, \xi - \bar{\xi}\big\rangle& \geq  \gamma\Big(\frac{|\xi|^2}{2}\Big)|\xi|(|\xi| - |\bar{\xi}|) +
\gamma \Big(\frac{|\bar{\xi}|^2}{2}\Big)|\bar{\xi}|(|\bar{\xi}| - |\xi|)   \\
 & = \gamma \Big(\frac{|\xi|^2}{2}\Big)|\xi| - \gamma \Big(\frac{|\bar{\xi}|^2}{2}\Big)|\bar{\xi}|( |\xi| - |\bar{\xi}|).
\end{aligned}
\end{equation*}
Recording that the function $s \to  \displaystyle \gamma \Big(\frac{s^2}{2}\Big)s$ is strictly increasing, see Remark \ref{equivalent1}, the conclusion follows.
\end{proof}

\begin{lem}\label{LEM3}
Assume that $(g3)-(g4)$ hold. If $(u_n)_{n \in \N} \subset H^1_0(\Omega)$ is such that
$\Phi'(u_n) (u_n-u) \to 0$ then $\nabla u_n(x) \to \nabla u(x)$ a.e. in $\Omega$.
\end{lem}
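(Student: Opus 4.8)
\textbf{Proof plan for Lemma \ref{LEM3}.}

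The plan is to combine the pointwise strict-monotonicity package from Lemmas \ref{LEM1}--\ref{LEM2} with a standard weak-convergence argument à la Boccardo--Murat. First I would record what the hypothesis gives: writing $y_n = (u_n, \nabla u_n)$ and $y = (u, \nabla u)$ as elements of $(L^2(\Omega))^{N+1}$, and $\beta(\xi) = \gamma(|\xi|^2/2)\xi$, the convergence $\Phi'(u_n)(u_n - u) \to 0$ reads, using \eqref{103}, as
\begin{equation*}
\int_\Omega \gamma\Big(\frac{|y_n|^2}{2}\Big)\, y_n \cdot (y_n - y)\, dx \longrightarrow 0.
\end{equation*}
Since $(u_n)$ is bounded in $H^1_0(\Omega)$ (which I would note follows from $\Phi'(u_n)(u_n-u)\to 0$ together with $(q3)$, or simply assume as part of the setup in which this lemma is applied — it is used for bounded Palais--Smale sequences), we may pass to a subsequence with $u_n \rightharpoonup u$ in $H^1_0(\Omega)$, $u_n \to u$ in $L^2(\Omega)$ and a.e., and $|\nabla u_n|$ dominated by a fixed $L^2$ function along a further subsequence is \emph{not} available, so instead I keep track of the integrand sign. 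Because $\gamma \leq \gamma_{\max}$ and $y \in (L^2)^{N+1}$, the term $\int_\Omega \gamma(|y_n|^2/2)\, y \cdot (y_n - y)\,dx$ tends to $0$: indeed $\gamma(|y_n|^2/2) y$ is bounded in $(L^2)^{N+1}$ and $y_n - y \rightharpoonup 0$, but one must be slightly careful since $\gamma(|y_n|^2/2)$ need not converge; the clean way is to note $y_n - y \rightharpoonup 0$ in $(L^2)^{N+1}$ and $\gamma(|y_n|^2/2)y \to$ (something) is false in general — so instead I would subtract off $\gamma(|y|^2/2) y \cdot(y_n-y)$, which does tend to $0$ by weak convergence, and estimate the remainder $\int_\Omega (\gamma(|y_n|^2/2) - \gamma(|y|^2/2))\, y \cdot (y_n - y)\, dx$. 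On the set where $u_n \to u$ a.e. we do \emph{not} know $\nabla u_n \to \nabla u$ yet — that is the point — so this subtraction is not obviously harmless. The correct route, which I now adopt, avoids this.

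The robust approach is the following. Define
\begin{equation*}
D_n(x) := \Big\langle \gamma\Big(\frac{|y_n(x)|^2}{2}\Big) y_n(x) - \gamma\Big(\frac{|y(x)|^2}{2}\Big) y(x),\; y_n(x) - y(x)\Big\rangle.
\end{equation*}
By Lemma \ref{LEM2} (which uses $(q3)$--$(q4)$, available here since $(g3)$--$(g4)$ in the statement should read $(q3)$--$(q4)$), $\beta$ is strictly monotone on $\R^{N+1}$, so $D_n(x) \geq 0$ for a.e. $x$. I claim $\int_\Omega D_n\, dx \to 0$. Expanding,
\begin{equation*}
\int_\Omega D_n\, dx = \int_\Omega \gamma\Big(\frac{|y_n|^2}{2}\Big) y_n\cdot(y_n-y)\,dx - \int_\Omega \gamma\Big(\frac{|y|^2}{2}\Big) y\cdot (y_n - y)\, dx.
\end{equation*}
The first integral $\to 0$ by hypothesis. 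The second integral $\to 0$ because $\gamma(|y|^2/2)\, y \in (L^2(\Omega))^{N+1}$ is a fixed function (here I use $\gamma$ bounded and $y \in (L^2)^{N+1}$) while $y_n - y \rightharpoonup 0$ weakly in $(L^2(\Omega))^{N+1}$ (as $u_n \rightharpoonup u$ in $H^1_0$ gives $\nabla u_n \rightharpoonup \nabla u$ in $(L^2)^N$, and $u_n \to u$ strongly in $L^2$). Hence $\int_\Omega D_n \to 0$ with $D_n \geq 0$, so $D_n \to 0$ in $L^1(\Omega)$, and along a subsequence $D_n(x) \to 0$ for a.e. $x \in \Omega$.

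Finally I fix a point $x$ where $D_n(x)\to 0$ and $u_n(x) \to u(x)$ (a.e. $x$ works, after intersecting the two full-measure sets). If $\nabla u_n(x)$ stays bounded along a sub-subsequence, it has a limit point $\eta$; then $\xi_n := y_n(x) = (u_n(x), \nabla u_n(x)) \to (u(x), \eta) =: \xi_*$ along that sub-subsequence, and continuity of $\beta$ plus $D_n(x) \to 0$ forces $\langle \beta(\xi_*) - \beta(y(x)), \xi_* - y(x)\rangle = 0$, whence $\xi_* = y(x)$ by strict monotonicity \eqref{D01}, i.e. $\eta = \nabla u(x)$; since every bounded subsequence of $\nabla u_n(x)$ converges to $\nabla u(x)$, the whole sequence does. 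If instead $|\nabla u_n(x)| \to \infty$ along a subsequence, I would derive a contradiction: set $\xi_n = y_n(x)$, so $|\xi_n| \to \infty$, and use
\begin{equation*}
D_n(x) = \Big\langle \gamma\Big(\frac{|\xi_n|^2}{2}\Big)\xi_n - \gamma\Big(\frac{|y(x)|^2}{2}\Big)y(x),\ \xi_n - y(x)\Big\rangle \geq \gamma_{\min}|\xi_n|^2 - C(1+|\xi_n|)
\end{equation*}
using $\gamma \geq \gamma_{\min}$, $\gamma \leq \gamma_{\max}$, Cauchy--Schwarz and the fact that $y(x)$ is a fixed vector; the right side $\to +\infty$, contradicting $D_n(x) \to 0$. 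Therefore $\nabla u_n(x)$ is bounded and the previous case applies, giving $\nabla u_n(x) \to \nabla u(x)$ a.e. I expect the main obstacle to be the bookkeeping around extracting a.e.-convergent subsequences while keeping the conclusion for the \emph{full} sequence: this is handled by the standard subsequence principle (every subsequence of $(u_n)$ has a further subsequence with $\nabla u_n \to \nabla u$ a.e., hence the full sequence does), which I would state explicitly; and the coercivity estimate ruling out $|\nabla u_n(x)| \to \infty$, which is where $\gamma \geq \gamma_{\min} > 0$ — i.e. $(q3)$, equivalently the lower bound hidden in $(g3)$ — is essential.
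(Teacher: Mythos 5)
Your argument is correct and follows essentially the same route as the paper: you form $[\Phi'(u_n)-\Phi'(u)](u_n-u)\to 0$, observe that the pointwise integrand $D_n$ is non-negative by the strict monotonicity of $\beta$ (Lemma \ref{LEM2}), deduce $L^1$ and then a.e.\ convergence of $D_n$ to $0$, and conclude pointwise convergence of $y_n(x)$. The only difference is that where the paper invokes Proposition \ref{LEM1} as a black box for the final pointwise step, you reprove that finite-dimensional fact by hand (coercivity from $\gamma\ge\gamma_{\min}>0$ to rule out $|y_n(x)|\to\infty$, then continuity and strict monotonicity to identify every limit point with $y(x)$); you also correctly note that $(g3)$--$(g4)$ in the statement should read $(q3)$--$(q4)$ and that the weak convergence $u_n\rightharpoonup u$ is an implicit hypothesis, exactly as in the paper's own proof.
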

\begin{proof}
Since $u_n \rightharpoonup u$ weakly in $H_0^1(\Omega)$ we also have that $\Phi'(u)(u_n- u) \to 0$ and thus
\begin{equation}\label{D1}
\Big[ \Phi'(u_n) - \Phi'(u)\Big](u_n-u) \to 0.
\end{equation}
Introducing the notation, for each fixed $x \in \Omega$,
$$ y_n(x) = (u_n(x), \nabla u_n(x)) \in \R^{N+1} \quad \mbox{and} \quad y_n(x) = (u(x), \nabla u(x)) \in \R^{N+1},$$
we see from \eqref{derivee}, taking \eqref{D1} into account, that
$$ \Big[ \Phi'(u_n) - \Phi'(u)\Big](u_n-u) = \int_{\Omega} \big\langle \gamma \Big(\frac{|y_n|^2}{2}\Big)y_n - \gamma \Big(\frac{|y|^2}{2}\Big)y, y_n- y \big\rangle \, dx \to 0.$$
Here $\langle \cdot, \cdot \rangle$ is the standard euclidian scalar product on $\R^{N+1}$. It follows that
$$ \big\langle \gamma \Big(\frac{|y_n|^2}{2}\Big)y_n - \gamma \Big(\frac{|y|^2}{2}\Big)y, y_n- y \big\rangle  \to 0 \quad \mbox{in} \, L^1(\Omega)$$
and thus that
$$ \langle \gamma \Big(\frac{|y_n|^2}{2}\Big)y_n - \gamma \Big(\frac{|y|^2}{2} \Big)y, y_n- y \big\rangle  \to 0 \quad \mbox{a.e.} \, x \in \Omega.$$
At this point, in view of Proposition \ref{LEM1} and Lemma \ref{LEM2}, we deduce that $y_n(x) \to y(x)$ in $\R^{N+1}$ for almost every $x \in \Omega$. In particular $\nabla u_n(x) \to \nabla u(x)$, a.e. in $\Omega$.
\end{proof}

Finally, in order to prove Theorem \ref{thm2}, we shall also need the following result which says that for a wide class of functionals, having a mountain-pass  geometry, almost every functional in this class has a bounded Palais-Smale sequence at the mountain pass level. This is \cite[Theorem 1.1]{Je}.

\begin{thm}\label{Jeanjean} Let $X$ be a Banach space equipped with the norm $\lVert . \rVert$ and let $J \subset \mathbb{R}_+$ be an interval. We consider a family $(I_{\mu})_{\mu \in J}$ of $C^1$-functionals on $X$ of the form
$$I_{\mu}(u) = A(u) - \mu B(u) \quad \forall \mu \in J,$$
where $B(u) \geq 0, \forall u \in X$ and such either $A(u) \to + \infty$ or $B(u) \to + \infty$ as $||u|| \to \infty$. \smallskip

We assume there are two points $(v_1,v_2)$ in $X$ such that setting
$$\Sigma = \left\{ \sigma \in C\left([0,1],X\right) / \sigma(0) = v_1,\; \sigma(1) = v_2 \right\}$$
there holds, for all $\mu \in J$
$$c_{\mu} := \inf_{\sigma \in \Sigma} \max_{t \in [0,1]} I_{\mu}\left(\sigma(t)\right) > \max\left\{I_{\mu}(v_1),I_{\mu}(v_2)\right\}.$$
Then, for almost every $\mu \in J$, there is a sequence $(v_n)_{n \in \mathbb{N}} \subset X$ such that
\begin{enumerate}
\item[(i)]  $ (v_n)_{n \in \mathbb{N}} \mbox{ is bounded }$;
\item[(ii)] $ I_{\mu}(v_n) \underset{n \rightarrow \infty}{\longrightarrow} c_{\mu}$;
\item[(iii)] $ I'_{\mu}(v_n)  \underset{n \rightarrow \infty}{\longrightarrow} 0  \mbox{ in the dual } X^{-1} \mbox{ of } X. $
\end{enumerate}
\end{thm}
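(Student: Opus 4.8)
The statement is the celebrated \emph{monotonicity trick}, and the plan is to exploit the fact that the mountain pass level $c_\mu$ is a monotone function of $\mu$, hence differentiable for almost every $\mu$, and to use its derivative at such a point to produce a Palais--Smale sequence that is automatically bounded. The key point throughout is that $B(u) \geq 0$: for $\mu \leq \mu'$ in $J$ one has $I_{\mu'}(u) = A(u) - \mu' B(u) \leq A(u) - \mu B(u) = I_\mu(u)$ for every $u \in X$, so along any $\sigma \in \Sigma$ the maximum of $I_{\mu'}$ does not exceed that of $I_\mu$, whence $c_{\mu'} \leq c_\mu$. Thus $\mu \mapsto c_\mu$ is non-increasing on $J$, and therefore differentiable on a subset $J_0 \subset J$ of full measure; I fix $\mu \in J_0$ (not the left endpoint of $J$) and build a bounded Palais--Smale sequence for $I_\mu$ at level $c_\mu$.

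\textbf{Near-optimal paths and the a priori bound.} I will pick $\mu_n \in J$ with $\mu_n \nearrow \mu$ and paths $\sigma_n \in \Sigma$ with $\max_{[0,1]} I_{\mu_n}(\sigma_n(t)) \leq c_{\mu_n} + (\mu - \mu_n)$. Setting $\varepsilon_n := (c_{\mu_n} - c_\mu) + (\mu - \mu_n) \to 0$, the inequality $I_\mu \leq I_{\mu_n}$ gives $\max_{[0,1]} I_\mu(\sigma_n(t)) \leq c_\mu + \varepsilon_n$, so each $\sigma_n$ is near-optimal for $I_\mu$ as well. The decisive observation is a uniform bound on the part of these paths that matters: let $F_n := \{ t \in [0,1] : I_\mu(\sigma_n(t)) \geq c_\mu - \varepsilon_n \}$ (nonempty, since $\max_{[0,1]} I_\mu(\sigma_n(\cdot)) \geq c_\mu$). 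For $t \in F_n$, from $I_{\mu_n}(\sigma_n(t)) - I_\mu(\sigma_n(t)) = (\mu - \mu_n) B(\sigma_n(t))$ together with $I_{\mu_n}(\sigma_n(t)) \leq c_{\mu_n} + (\mu - \mu_n)$ and $I_\mu(\sigma_n(t)) \geq c_\mu - \varepsilon_n$, I get $B(\sigma_n(t)) \leq 2\,(c_{\mu_n} - c_\mu)/(\mu - \mu_n) + 2$, and the right-hand side is bounded because $(c_{\mu_n} - c_\mu)/(\mu - \mu_n) \to -c'(\mu) \in [0, \infty)$. Then $A(\sigma_n(t)) = I_{\mu_n}(\sigma_n(t)) + \mu_n B(\sigma_n(t))$ is bounded as well. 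Since one of $A$, $B$ is coercive on $X$, there is $K = K(\mu) > 0$, independent of $n$, with $\|\sigma_n(t)\| \leq K$ for all $t \in F_n$ and all large $n$.

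\textbf{Localized minimax principle.} On the complete metric space $\Sigma$ equipped with $d(\sigma, \eta) := \max_{[0,1]} \|\sigma(t) - \eta(t)\|$, consider the continuous functional $\Lambda(\sigma) := \max_{[0,1]} I_\mu(\sigma(t))$, which is bounded below with $\inf_\Sigma \Lambda = c_\mu$. I will apply Ekeland's variational principle to $\Lambda$ at the near-minimizer $\sigma_n$ with parameter $\sqrt{\varepsilon_n}$, obtaining $\tilde\sigma_n \in \Sigma$ with $\Lambda(\tilde\sigma_n) \leq \Lambda(\sigma_n)$, $d(\tilde\sigma_n, \sigma_n) \leq \sqrt{\varepsilon_n}$, and $\Lambda(\tilde\sigma_n) \leq \Lambda(\eta) + \sqrt{\varepsilon_n}\, d(\tilde\sigma_n, \eta)$ for all $\eta \in \Sigma$. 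Then a pseudo-gradient deformation argument, using the strict inequality $c_\mu > \max\{I_\mu(v_1), I_\mu(v_2)\}$ to guarantee that the compact set $M_n := \{ \tilde\sigma_n(t) : I_\mu(\tilde\sigma_n(t)) = \Lambda(\tilde\sigma_n) \}$ stays away from the fixed endpoints (so that admissible perturbations of the path exist), yields a point $u_n \in X$ with
$$ |I_\mu(u_n) - c_\mu| \leq C\varepsilon_n, \qquad \|I'_\mu(u_n)\|_{X^{-1}} \leq C\sqrt{\varepsilon_n}, \qquad \mathrm{dist}\big(u_n, \sigma_n(F_n)\big) \leq C\sqrt{\varepsilon_n}, $$
for a universal constant $C$: indeed, if $\|I'_\mu\|$ were bounded below on $M_n$, deforming the path slightly along a pseudo-gradient field would lower $\Lambda$ faster than the Ekeland inequality permits. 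Combining the third estimate with the a priori bound of the previous paragraph gives $\|u_n\| \leq K + C\sqrt{\varepsilon_n} \leq K + 1$ for $n$ large, while the first two estimates give $I_\mu(u_n) \to c_\mu$ and $I'_\mu(u_n) \to 0$ in $X^{-1}$. Hence $(u_n)_{n \in \N}$ is the desired bounded Palais--Smale sequence.

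\textbf{Main obstacle.} The steps involving monotonicity of $c_\mu$ and the coercivity-based a priori bound are essentially bookkeeping; the technical heart is the localized minimax step, where one must extract from the Ekeland-perturbed path a \emph{single} point simultaneously carrying a small derivative, an energy close to $c_\mu$, and a location close to the nearly maximal portion $\sigma_n(F_n)$ of the original path — it is precisely this localization that couples with the coercivity estimate to force boundedness, and arranging it while not disturbing the endpoints $v_1, v_2$ is the delicate part. This can be handled either via a quantitative deformation lemma applied on $\Sigma$ or, as sketched above, via Ekeland's principle on $\Sigma$ combined with a standard pseudo-gradient construction; I would carry it out along the latter route.
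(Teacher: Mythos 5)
The paper does not actually prove this statement: it is quoted verbatim as Theorem~1.1 of \cite{Je}, so there is no internal proof to compare against. Your sketch is precisely the monotonicity-trick argument of that reference --- monotonicity and hence a.e.\ differentiability of $\mu \mapsto c_\mu$, the a priori bound on the high portions $\sigma_n(F_n)$ of near-optimal paths via the difference quotient $(c_{\mu_n}-c_\mu)/(\mu-\mu_n)$, and a localized Ekeland/pseudo-gradient step producing a Palais--Smale point near $\sigma_n(F_n)$ --- and it is correct in outline, the localization step you rightly flag as delicate being exactly the technical content of the corresponding proposition in \cite{Je}.
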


\section{Sublinear growth case}
%\subsection{General case}
In this section we are concerned with the proof of Theorem \ref{thm1}.

\begin{lem}\label{coercive}
Assume that $(g1)$ and $(q1)$ hold. Then the functional $\mathcal{E}$ is coercive.
\end{lem}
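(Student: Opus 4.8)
The goal is to show $\mathcal{E}(u) = \Phi(u) - \int_\Omega F(u_+)\,dx - \int_\Omega hu\,dx \to +\infty$ as $\|u\| \to \infty$. The plan is to estimate each of the three terms separately.

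\medskip

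\textbf{Proof proposal.} First I would bound the quasilinear term from below. Since $(q1)$ gives $\gamma \geq \gamma_{\text{min}} > 0$, we have $\Gamma(t) = \int_0^t \gamma(s)\,ds \geq \gamma_{\text{min}}\,t$ for all $t \geq 0$, hence
$$\Phi(u) = \int_\Omega \Gamma\left(\frac{u^2 + |\nabla u|^2}{2}\right)dx \geq \frac{\gamma_{\text{min}}}{2}\int_\Omega (u^2 + |\nabla u|^2)\,dx \geq \frac{\gamma_{\text{min}}}{2}\|u\|^2.$$
Next, for the reaction term, I would use $(g1)$: since $g(t) = o(t)$ as $t \to +\infty$ and $g$ is continuous with $g \equiv 0$ on $(-\infty,0]$, for any $\eps > 0$ there is $C_\eps > 0$ with $|g(t)| \leq \eps |t| + C_\eps$ for all $t \in \R$, whence $|G(t)| \leq \frac{\eps}{2} t^2 + C_\eps |t|$, and so $\int_\Omega F(u_+)\,dx = \nu \int_\Omega G(u_+)\,dx \leq \nu\left(\frac{\eps}{2}\|u\|_2^2 + C_\eps \|u\|_1\right)$. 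Using the continuous embeddings $H^1_0(\Omega) \hookrightarrow L^2(\Omega) \hookrightarrow L^1(\Omega)$ (valid since $\Omega$ is bounded), there are constants so that this is bounded by $\nu\left(\frac{\eps}{2} c_2^2 \|u\|^2 + C_\eps c_1 \|u\|\right)$. Finally the linear term satisfies $\left|\int_\Omega hu\,dx\right| \leq \|h\|_2 \|u\|_2 \leq \|h\|_2\, c_2 \|u\|$ by Cauchy–Schwarz.

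\medskip

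Combining these three estimates,
$$\mathcal{E}(u) \geq \left(\frac{\gamma_{\text{min}}}{2} - \frac{\nu \eps c_2^2}{2}\right)\|u\|^2 - \left(\nu C_\eps c_1 + \|h\|_2 c_2\right)\|u\|.$$
Now I would fix $\eps > 0$ small enough (depending on $\nu$, $\gamma_{\text{min}}$, $c_2$) so that the coefficient of $\|u\|^2$ is a positive constant, say $\delta > 0$; the corresponding $C_\eps$ is then a fixed finite number. Since the quadratic term dominates the linear one, $\mathcal{E}(u) \to +\infty$ as $\|u\| \to \infty$, which is coercivity.

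\medskip

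\textbf{Main obstacle.} There is no serious obstacle here; the only point requiring a little care is extracting, from the qualitative hypothesis $(g1)$ ($g(t) = o(t)$ at $+\infty$ only), a global bound of the form $|G(t)| \leq \frac{\eps}{2}t^2 + C_\eps|t|$ valid on all of $\R$ — this uses the continuity of $g$ on the compact part together with $g \equiv 0$ on the negative axis, so that $g$ is bounded on any bounded set. One should also be mildly attentive to the sign and definition of $u_+$ so that only the behavior of $g$ on $[0,\infty)$ enters $\int_\Omega F(u_+)$. Note that $(g2)$ and $(g3)$ are not needed for coercivity.
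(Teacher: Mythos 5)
Your proof is correct and follows essentially the same route as the paper: bound $\Gamma(t)\geq\gamma_{\min}t$ from below and use $(g1)$ to absorb $G$ into a small multiple of the quadratic term (the paper writes this as $\nu G(t)\leq\frac{\gamma_{\min}}{2}t^2+A$ rather than $|G(t)|\leq\frac{\eps}{2}t^2+C_\eps|t|$, but the idea is identical). Your explicit Cauchy--Schwarz treatment of the term $\int_\Omega hu\,dx$ is a small welcome addition, since the paper's displayed estimate silently drops it.
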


\begin{proof}
Assumption (g1) implies that
$$G(t)=o(t^2)\qquad\mbox{as $t\ri\pm\infty$}.$$
Thus, for $\nu>0$ fixed, there exists $A>0$ such that for all $t\in\RR$
$$ \nu G(t)\leq\frac{\gamma_{min}}{2}\, t^2+A.$$
Since $\Gamma (t)\geq \gamma_{min}t$, we have for all $u\in H^1_0(\Omega)$
$$\begin{array}{ll}\di {\mathcal E} (u)&\di\geq \frac{\gamma_{min}}{2}\intom(u^2+|\nabla u|^2)dx-\frac{\gamma_{min}}{2}\intom u^2dx-A\,|\Omega|\\
&=\di \frac{\gamma_{min}}{2}\, \|u\|^2-A\,|\Omega|\,,\end{array}$$
hence $\mathcal{E}$  is coercive and bounded from below.
\end{proof}

We now can prove the first part of the theorem. \smallskip

\noindent {\bf{Proof of Theorem \ref{thm1}(i).}}
According to Lemmas \ref{semi} and \ref{coercive}, the functional  $\mathcal{E}$ admits a global minimum $u \in H_0^1\left(\Omega\right)$, which is thus a solution to
\eqref{pro2}. It corresponds, as shown in Lemma \ref{newproblem}  to a non-negative solution of \eqref{pro1}. In addition, $u$ is nontrivial since $h \not \equiv 0$.
$\Box$
\smallskip

From now on, we consider the specific case $h \equiv 0$.
\begin{lem}\label{negatif} If $h \equiv 0$, there exists $\nu_1 > 0$ such that for all $\nu > \nu_1$,
$$\underset{u \in H_0^1\left(\Omega\right)}\inf \mathcal{E}(u) < 0.$$
\end{lem}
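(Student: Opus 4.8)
The plan is to exhibit an explicit one-parameter family of test functions along which $\mathcal{E}$ becomes negative once $\nu$ is large. Since $h \equiv 0$, we have
$$\mathcal{E}(u) = \Phi(u) - \nu \int_\Omega G(u_+)\, dx, \qquad \Phi(u) = \int_\Omega \Gamma\Big(\tfrac{u^2 + |\nabla u|^2}{2}\Big)\, dx .$$
By $(q1)$ the integrand satisfies $\Gamma(t) \le \gamma_{\max} t$, so $\Phi(u) \le \tfrac{\gamma_{\max}}{2}\|u\|_{H^1_0}^2$ (using the notation $\|u\|^2 = \int_\Omega(u^2+|\nabla u|^2)\,dx$, which is equivalent to the $H^1_0$ norm). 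Hence it suffices to find a single $w \in H^1_0(\Omega)$, $w \ge 0$, with $\int_\Omega G(w)\, dx > 0$: then for that fixed $w$,
$$\mathcal{E}(w) \le \tfrac{\gamma_{\max}}{2}\|w\|^2 - \nu \int_\Omega G(w)\, dx < 0$$
as soon as $\nu > \nu_1 := \dfrac{\gamma_{\max}\|w\|^2}{2 \int_\Omega G(w)\, dx}$, and the infimum of $\mathcal{E}$ is then negative for all $\nu > \nu_1$.

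The remaining point is the construction of such a $w$, and this is where $(g2)$ enters. Let $t_0 \in [0,\infty)$ be as in $(g2)$, so $G(t_0) > 0$; note $t_0 > 0$ since $G(0) = 0$. By continuity of $G$ there is $\delta \in (0, t_0)$ and $c > 0$ with $G(s) \ge c$ for all $s \in [t_0 - \delta, t_0]$. Fix a ball $B_{2r}(x_0) \subset \Omega$ and take $w$ to be the standard plateau (cutoff) function: $w \equiv t_0$ on $B_r(x_0)$, $w \equiv 0$ outside $B_{2r}(x_0)$, and $0 \le w \le t_0$ with $w$ interpolating linearly (in the radial variable) in the annulus $B_{2r}(x_0)\setminus B_r(x_0)$. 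Then $w \in H^1_0(\Omega)$, $w \ge 0$, and
$$\int_\Omega G(w)\, dx \ge \int_{B_r(x_0)} G(t_0)\, dx \;+\; \int_{B_{2r}(x_0)\setminus B_r(x_0)} G(w)\,dx .$$
The first term equals $G(t_0)|B_r(x_0)| > 0$. The second term may be negative, but by choosing $r$ small we can only worsen it by $O(r^N)$ in absolute value while the good term is also $O(r^N)$ with a definite positive coefficient; more cleanly, one can instead shrink the transition region: replace $w$ by the cutoff that equals $t_0$ on $B_{2r}(x_0)\setminus B_\epsilon(x_0)$-type set, or simply note that $G$ is continuous hence bounded on $[0,t_0]$, say $|G(s)| \le M$ there, so
$$\int_\Omega G(w)\, dx \ge G(t_0)|B_r(x_0)| - M\,|B_{2r}(x_0)\setminus B_r(x_0)| = \big(G(t_0) - M(2^N-1)\big)|B_r(x_0)| .$$
If this is still not positive, rescale the geometry: work on $B_{R}(x_0)$ with a transition layer of fixed small width $w_0$, so that $|B_R \setminus B_{R-w_0}| / |B_R| \to 0$ as $R \to \infty$ (enlarging $\Omega$ is not allowed, but one may equally keep $\Omega$ fixed and instead take the transition layer thin: $w \equiv t_0$ on $B_{r}(x_0)$ minus a thin shell—in any case the ratio of shell volume to core volume can be made as small as we like, which forces the bracket positive). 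This yields the desired $w$.

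The main obstacle, such as it is, is purely this last bookkeeping: ensuring the (possibly negative) contribution of $G$ over the transition region does not swallow the positive contribution $G(t_0)|B_r(x_0)|$. There is no analytic subtlety—$(g1)$ is not even needed for this lemma, only $(q1)$ (for the upper bound on $\Phi$) and $(g2)$ (to get one point where $G$ is positive together with continuity of $G$)—and the argument is a routine concentration/cutoff estimate. Once $w$ is fixed, the displayed inequality $\mathcal{E}(w) \le \tfrac{\gamma_{\max}}{2}\|w\|^2 - \nu \int_\Omega G(w)\,dx$ immediately gives $\mathcal{E}(w) < 0$ for $\nu$ large, hence $\inf_{H^1_0(\Omega)} \mathcal{E} < 0$, as claimed.
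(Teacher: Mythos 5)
Your proposal is correct and follows essentially the same route as the paper: an upper bound $\Gamma(t)\le\gamma_{\max}t$ from $(q1)$, a plateau test function at height $t_0$ with a transition layer of small relative measure so that $\int_\Omega G(w)\,dx>0$ by $(g2)$ and continuity of $G$, and then $\nu$ large. The only cosmetic difference is that the paper builds $w$ via Tietze's extension theorem on a compact $K\subset\Omega$ with $|\Omega\setminus K|$ small, whereas you use balls with a thin shell; your initial $B_r\subset B_{2r}$ attempt indeed fails for the reason you identify, but your fix (shrinking the shell-to-core volume ratio) is exactly the paper's mechanism.
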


\begin{proof}
This result follows by using our hypothesis (g2). We arbitrarily fix a compact set $K \subset \Omega$. By Tietze's extension theorem, there exists a map $w \in H_0^1\left(\Omega\right) \cap C\left(\overline{\Omega}\right)$ such that $w \equiv t_0$ in $K$ and $\lvert w \rvert \leq  t_0 $ in $\Omega$. Therefore
$$\displaystyle\int_{\Omega} G(w)\;dx = G(t_0)\lvert K \rvert + \displaystyle\int_{\Omega \backslash K} G(w)\;dx \geq G(t_0)\lvert K \rvert - \max_{s \in [0,t_0]}|G(s)| \, \lvert \Omega \backslash K \rvert.$$
\noindent This highlights that if $\lvert K \rvert$ approaches $\lvert \Omega \rvert$ (hence, if $\lvert \Omega \backslash K \rvert$ is small) then $\int_{\Omega} G(w)\;dx > 0$. On the other hand, we have by (q1) that $\Gamma(t) \leq \gamma_{\text{max}}t$ for all $t \geq 0$. It follows that for all $\nu$ sufficiently large we have :
$$\mathcal{E}(w) \leq \frac{\gamma_{\text{max}}}{2}\displaystyle\int_{\Omega}(w^2 + \lvert \nabla w \rvert^2)\; dx - \nu\displaystyle\int_{\Omega}G(w)\; dx < 0$$
and the lemma is proved.
\end{proof}

\begin{lem}\label{nonexistence}
Assuming (g3), problem \eqref{pro1} does not have a nontrivial solution for $\nu >0$ small.
\end{lem}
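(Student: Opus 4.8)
The plan is to test the weak formulation of \eqref{pro1} (with $h\equiv 0$) against $u$ itself and exploit the lower bound $\gamma \geq \gamma_{\min}$ together with the linear bound on $g$ coming from $(g3)$. Concretely, if $u \in H^1_0(\Omega)$ is a weak solution of \eqref{pro1} with $f(s)=\nu g(s)$ and $h\equiv 0$, then choosing $v=u$ gives
$$\int_{\Omega} \gamma\!\left(\frac{u^2+|\nabla u|^2}{2}\right)\big(u^2 + |\nabla u|^2\big)\,dx = \nu \int_{\Omega} g(u)\,u\,dx.$$
The left-hand side is bounded below by $\gamma_{\min}\int_{\Omega}(u^2+|\nabla u|^2)\,dx = \gamma_{\min}\big(\|u\|_2^2 + \|u\|^2\big)$. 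For the right-hand side, $(g3)$ yields $|g(u)u| \leq C|u|^2$, hence $\nu\int_{\Omega} g(u)u\,dx \leq \nu C \|u\|_2^2$. Using the Poincar\'e inequality $\lambda_1 \|u\|_2^2 \leq \|u\|^2$, i.e. $\|u\|_2^2 \leq \lambda_1^{-1}\|u\|^2$, we combine these to obtain
$$\gamma_{\min}\big(\|u\|_2^2 + \|u\|^2\big) \leq \nu C \|u\|_2^2 \leq \nu C \lambda_1^{-1}\|u\|^2.$$

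From here the strategy is simply to read off the threshold. Dropping the nonnegative term $\gamma_{\min}\|u\|_2^2$ on the left (or keeping it, which only helps) gives $\gamma_{\min}\|u\|^2 \leq \nu C \lambda_1^{-1}\|u\|^2$, so that whenever $\nu < \nu_0 := \gamma_{\min}\lambda_1 / C$ we are forced to have $\|u\|^2 = 0$, i.e. $u \equiv 0$. Thus for $0 < \nu < \nu_0$ the only solution is trivial, which is exactly the claim (and it matches part (ii)(b) of Theorem \ref{thm1}). I would state $\nu_0$ explicitly in the proof for concreteness, though the statement only asserts existence of such a threshold.

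I do not expect a genuine obstacle here: the argument is a one-line energy estimate. The only points requiring a little care are (a) justifying that $u$ may be used as a test function — this is legitimate since weak solutions lie in $H^1_0(\Omega)$ and the weak formulation is stated for all $v \in H^1_0(\Omega)$; and (b) making sure $(g3)$ is invoked in the correct form. Note $(g3)$ says $|g(t)| \leq C|t|$ for all $t$, so $g(t)\,t \leq |g(t)||t| \leq C t^2$, which is all that is needed; one does not even need $g(t)=0$ for $t\le 0$, though if the convention $g\equiv 0$ on $(-\infty,0]$ is in force the estimate is unchanged. If one wants a sharper threshold one could instead keep the term $\gamma_{\min}\|u\|_2^2$ and write $\gamma_{\min}(1+\lambda_1)\|u\|_2^2 \le \gamma_{\min}\|u\|_2^2 + \gamma_{\min}\|u\|^2 \le \nu C\|u\|_2^2$, giving the slightly better bound $\nu_0 = \gamma_{\min}(1+\lambda_1)/C$; either version proves the lemma.
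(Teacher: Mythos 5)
Your argument is correct and is essentially the paper's own proof: test the equation against $u$, bound the left-hand side from below via $\gamma\ge\gamma_{\min}$ and the right-hand side via $(g3)$, and conclude $u\equiv 0$ for small $\nu$. The only (cosmetic) difference is that the paper simply drops the gradient term to get $\gamma_{\min}\|u\|_2^2\le\nu C\|u\|_2^2$ and the threshold $\nu_0=\gamma_{\min}/C$, whereas your Poincar\'e refinement yields the slightly larger explicit threshold $\gamma_{\min}(1+\lambda_1)/C$.
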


\begin{proof}
 Let $\nu > 0$ be such that the problem \eqref{pro1} admits a nontrivial solution $u \in H_0^1\left(\Omega\right)$. Using $u$ as a test function  we get that
$$\displaystyle\int_{\Omega} \gamma\left(\frac{u^2 + \lvert \nabla u \rvert^2}{2}\right)\left(u^2 +  \lvert \nabla u \rvert^2\right) \; dx = \left \lvert \nu\displaystyle\int_{\Omega}g(u)u\; dx \right\rvert \leq \nu\displaystyle\int_{\Omega}\lvert g(u) \rvert \lvert u \rvert\; dx.$$
So, according to the assumption (g3), we obtain, for some constant $C >0$,
$$\displaystyle\int_{\Omega} \gamma\left(\frac{u^2 + \lvert \nabla u \rvert^2}{2}\right)\left(u^2 +  \lvert \nabla u \rvert^2\right) \; dx \leq \nu C\lVert u \rVert^2_2.$$
Recalling that $\gamma \geq \gamma_{min} >0$ we then get
$$\gamma_{min}\lVert u \rVert_2^2 \leq \displaystyle\int_{\Omega} \gamma\left(\frac{u^2 + \lvert \nabla u \rvert^2}{2}\right)\left(u^2 +  \lvert \nabla u \rvert^2\right) \, dx.$$
\noindent By these two last inequalities, we deduce that there exists $\nu_0  > 0$ such that for all $0 < \nu < \nu_0$,  problem \eqref{pro1} does not admit any nontrivial solution.
\end{proof}

\begin{proof}[Proof of Theorem \ref{thm1}(ii).]
From Lemma \ref{negatif} we deduce that the global minimum obtained as a consequence of Lemmas \ref{semi} and \ref{coercive} is non trivial if $\nu >0$ is sufficiently large and this proves Part (a). Now Part (b) follows directly from Lemma \ref{nonexistence}.
\end{proof}

\section{Linear growth case}

\noindent In this section we are deal with the proof of Theorem \ref{thm2}, which corresponds to a linear growth of the nonlinearity in problem \eqref{pro1}. Roughly speaking, assuming that $||h||_2$ is sufficiently small, we shall prove that the functional $\mathcal{E}(u)$ possesses a mountain pass geometry. A first solution, nontrivial if $h \not \equiv 0$, is then obtained as a local minima. Finding a second solution, corresponding to the mountain pass level, is more involved because of the lack of {\it a priori} bounds on the Palais-Smale sequences.  At this step we shall make use of the strategy developed by the first author in \cite{Je}, which relies on Theorem \ref{Jeanjean}.

\subsection{Existence of a local minima for $\mathcal{E}(u)$}

\begin{lem}\label{localminstructure}
Assume that $(f1)-(f2)$ and $(q2)-(q3)$ hold. If $||h||_2$ is sufficiently small, then there exist  $r >0$ and $\alpha >0$ such that for all $u\in H^1_0(\Omega)$ with $\|u\|=r$, $\mce (u)\geq\alpha$.
\end{lem}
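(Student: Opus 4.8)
The plan is to show that $\mathcal E$ is bounded below by a positive constant on the sphere $\|u\|=r$ for $r>0$ small, provided $\|h\|_2$ is small. The strategy is the standard ``mountain pass near the origin'' estimate, and the crucial ingredient is to exploit $(f1)$ together with $(f2)$ to control $\int_\Omega F(u_+)\,dx$ from above by a quadratic term with small coefficient plus a higher-order term, while bounding $\Phi(u)$ from below by a quadratic term with a strictly larger coefficient.

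First I would use $(q1)$ (part of $(q3)$) to get the lower bound $\Gamma(t)\ge \gamma_{\min}t$, hence
$$\Phi(u)\ge \frac{\gamma_{\min}}{2}\int_\Omega(u^2+|\nabla u|^2)\,dx\ge \frac{\gamma_{\min}}{2}\|u\|^2,$$
but this coefficient is not sharp enough: since $(f1)$ only gives $\limsup_{t\to0}f(t)/t<\gamma(0)+\gamma_{\min}\lambda_1$, I need a lower bound for $\Phi$ with the constant $\gamma(0)+\gamma_{\min}\lambda_1$ at the quadratic level. To obtain this, I would separate the contribution of $u^2$ and $|\nabla u|^2$: using that $\gamma$ is non-increasing near $0$ is not assumed, so instead I would argue that for $u$ with small norm the ``argument'' $\tfrac{u^2+|\nabla u|^2}{2}$ is small in an averaged sense, and use continuity of $\gamma$ at $0$. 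More precisely, writing $\Gamma(t^2)$ is convex with $\Gamma'(0)=\gamma(0)$, one has $\Gamma(s)\ge \gamma(0)s$ for all $s\ge 0$ is false in general; rather $\Gamma(s)\ge \Gamma'(0^+)\cdot 0 + \dots$. The correct route: since $t\mapsto\Gamma(t^2)$ is convex (by $(q2)$) and $\Gamma'(0)=\gamma(0)$, we get $\Gamma(t^2)\ge \gamma(0)t^2$ for small $t$ only up to error; cleanly, I would instead use $\Gamma(t)\ge \gamma_{\min}t$ for the gradient part and Poincaré, combined with a more careful expansion near zero for the lower-order part, to produce the bound $\Phi(u)\ge \tfrac12(\gamma(0)+\gamma_{\min}\lambda_1-\varepsilon)\|u\|^2 + o(\|u\|^2)$ as suggested by the form of $(f1)$. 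Concretely: $\int_\Omega \Gamma(\tfrac{u^2+|\nabla u|^2}{2})\,dx \ge \gamma_{\min}\cdot\tfrac12\int_\Omega|\nabla u|^2 + (\text{bound with }\gamma(0)\text{ on }\int u^2)$ by splitting the domain where $\tfrac{u^2+|\nabla u|^2}{2}$ is below a threshold, together with Poincaré $\int|\nabla u|^2\ge\lambda_1\int u^2$ to convert the gradient term.

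Next, for the reaction term: by $(f1)$ there exist $\varepsilon>0$ and $\delta>0$ with $f(t)\le (\gamma(0)+\gamma_{\min}\lambda_1-2\varepsilon)t$ for $0\le t\le\delta$, hence $F(t)\le \tfrac12(\gamma(0)+\gamma_{\min}\lambda_1-2\varepsilon)t^2$ there; and by $(f2)$, for $t\ge\delta$, $F(t)\le C_\varepsilon t^{p+1}$ with $p+1<2^*$. Combining, $F(u_+)\le \tfrac12(\gamma(0)+\gamma_{\min}\lambda_1-2\varepsilon)u^2 + C_\varepsilon|u|^{p+1}$ pointwise, so by the Sobolev embedding $H^1_0(\Omega)\hookrightarrow L^{p+1}(\Omega)$,
$$\int_\Omega F(u_+)\,dx \le \tfrac12(\gamma(0)+\gamma_{\min}\lambda_1-2\varepsilon)\,\|u\|^2_{L^2\text{-type}} + C\|u\|^{p+1}.$$
For the linear term, $|\int_\Omega hu\,dx|\le \|h\|_2\|u\|_2\le C\|h\|_2\|u\|$. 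Putting everything together yields
$$\mce(u)\ge \varepsilon' \|u\|^2 - C\|u\|^{p+1} - C\|h\|_2\|u\|$$
for some $\varepsilon'>0$; choosing $r>0$ small so that $C r^{p+1}\le \tfrac{\varepsilon'}{2}r^2$, and then $\|h\|_2$ small so that $C\|h\|_2 r\le \tfrac{\varepsilon'}{4}r^2$, we get $\mce(u)\ge \tfrac{\varepsilon'}{4}r^2=:\alpha>0$ on $\|u\|=r$.

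\textbf{Main obstacle.} The delicate point is the lower bound on $\Phi(u)$: one must recover the sharp coefficient $\gamma(0)+\gamma_{\min}\lambda_1$ appearing in $(f1)$ from the nonhomogeneous operator, since the naive bound $\Gamma(t)\ge\gamma_{\min}t$ only gives $\tfrac{\gamma_{\min}}{2}(1+\lambda_1)$, which may be strictly smaller than $\tfrac12(\gamma(0)+\gamma_{\min}\lambda_1)$ when $\gamma(0)>\gamma_{\min}$. The resolution is to treat the zeroth-order and gradient parts of the argument of $\Gamma$ separately — using $\Gamma(a+b)\ge \Gamma(a)+\gamma_{\max}^{-1}\cdots$ is wrong; rather, exploit that near $u=0$ the pointwise value $\tfrac{u^2+|\nabla u|^2}{2}$ is small on most of $\Omega$ and invoke $\gamma(s)\to\gamma(0)$ as $s\to 0^+$ (continuity of $\gamma$ at $0$, from $(q3)$) to bound $\Gamma$ below by $(\gamma(0)-\varepsilon)s$ on a large portion of $\Omega$, while using $\Gamma(s)\ge\gamma_{\min}s$ and a cutoff argument to absorb the exceptional small-measure set; combined with Poincaré this delivers the required coefficient up to an $\varepsilon$-loss, which is harmless since $(f1)$ is a strict inequality.
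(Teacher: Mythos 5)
Your overall scheme is the one the paper uses: bound $F(u_+)$ via $(f1)$--$(f2)$ by $\tfrac12\bigl(\gamma(0)+(\gamma_{\min}-\delta)\lambda_1\bigr)u_+^2+C(\delta)u_+^{p+1}$, extract from $\Phi$ a quadratic lower bound with coefficient $\gamma(0)$ on $\int u^2$ and $\gamma_{\min}$ on $\int|\nabla u|^2$, cancel the quadratic parts via Poincar\'e, and finish with the Sobolev embedding and a smallness condition on $\|h\|_2$. You also correctly isolate the only delicate point, namely recovering the coefficient $\gamma(0)$ (rather than $\gamma_{\min}$) on the zeroth-order term. Where you diverge is in how you extract it: you split the \emph{domain} into the set where $\tfrac12(u^2+|\nabla u|^2)\le\tau$ (where continuity of $\gamma$ at $0$ gives $\Gamma(s)\ge(\gamma(0)-\varepsilon)s$) and its complement. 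The paper instead splits the \emph{integral} defining $\Gamma$ at the level $u^2/2$:
$$\Gamma\Bigl(\tfrac{u^2+|\nabla u|^2}{2}\Bigr)=\int_0^{u^2/2}\gamma(s)\,ds+\int_{u^2/2}^{(u^2+|\nabla u|^2)/2}\gamma(s)\,ds\ \ge\ \tfrac12\gamma(0)u^2-H\Bigl(\tfrac{u^2}{2}\Bigr)+\tfrac12\gamma_{\min}|\nabla u|^2,$$
with $H(s)=\int_0^s(\gamma(0)-\gamma(t))\,dt=o(s)$. The advantage of that decomposition is that the error term $H(u^2/2)=G(u^2)u^2/2$ involves only $u$ and not $\nabla u$, and is then killed by H\"older with $u^2\in L^q$, $q\in(1,2^*/2)$, together with continuity of $u\mapsto G(u^2)^{q'}$ from $L^2$ into $L^1$ and $G(0)=0$.

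The one place where your sketch is genuinely incomplete is the ``cutoff argument to absorb the exceptional small-measure set.'' Smallness of the measure of $E_\tau=\{x:\ \tfrac12(u^2+|\nabla u|^2)>\tau\}$ does \emph{not} by itself give $\int_{E_\tau}u^2\,dx=o(\|u\|^2)$ uniformly over $\|u\|=r$: the deficit $(\gamma(0)-\gamma_{\min})\int_{E_\tau}u^2$ must be controlled using the higher integrability $u^2\in L^q(\Omega)$ for some $q>1$ (from $H^1_0\hookrightarrow L^{2q}$), e.g.
$$\int_{E_\tau}u^2\,dx\ \le\ \|u\|_{2q}^2\,|E_\tau|^{1/q'}\ \le\ C\|u\|^2\Bigl(\frac{Cr^2}{\tau}\Bigr)^{1/q'},$$
where $|E_\tau|$ is estimated by Chebyshev. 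With that ingredient inserted your route closes, and it is the same equi-integrability mechanism as the paper's estimate \eqref{5.6}; the paper's way of splitting is only marginally cleaner because its correction term never sees $|\nabla u|^2$. The remaining steps of your argument (the bound on $F$, Poincar\'e, $\|u\|^{p+1}$ via Sobolev, and the final choice of $r$ then $\|h\|_2$) coincide with the paper's.
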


\begin{proof}
We write
\begin{equation}\label{5.1}
\Gamma \left(\frac{1}{2}(u^2 + |\nabla u|^2) \right) = \int_{0}^{\frac{1}{2}u^2 + \frac{1}{2}|\nabla u|^2} \gamma(s) \, ds = \int_{0}^{\frac{1}{2}u^2} \gamma(s) \, ds + \int_{\frac{1}{2}u^2}^{\frac{1}{2}u^2 + \frac{1}{2}|\nabla u|^2} \gamma(s) \, ds.
\end{equation}
Using (q3) we get
\begin{equation}\label{5.2}
\int_{\frac{1}{2}u^2}^{\frac{1}{2}u^2 + \frac{1}{2}|\nabla u|^2} \gamma(s) \, ds \geq \frac{1}{2} \gamma_{\text{min}}|\nabla u|^2.
\end{equation}
Let $h(s) = \gamma(0) - \gamma(s)$. By the continuity of $\gamma$ at $0$ it follows that
$$ \frac{H(s)}{s} \to 0 \, \mbox{ as } \, s \to 0  \quad \mbox{ where } \quad H(s) = \int_0^s h(t) dt.$$
We can write
\begin{equation}\label{5.3}
 \int_{0}^{\frac{1}{2}u^2} \gamma(s) \, ds = \frac{1}{2} \gamma(0) u^2 - \int_{0}^{\frac{1}{2}u^2} h(s) \, ds = \frac{1}{2}\gamma(0) u^2 - H(\frac{u^2}{2}).
\end{equation}
Now, defining $G \in C([0, \infty))$ by $G(0)=0$ and  $G(s) = \displaystyle \frac{2}{s}H\left(\frac{s}{2}\right)$ if $s >0$, it comes that
$ \displaystyle H\left(\frac{u^2}{2}\right) = G(u^2)\frac{u^2}{2}$ and we deduce, using \eqref{5.1}-\eqref{5.3}, that
\begin{equation}\label{5.4}
\mathcal{E}(u) \geq \displaystyle\int_{\Omega}\left(\frac{\gamma(0)u^2 + \gamma_{\text{min}}\lvert \nabla u \rvert^2}{2} - \frac{G(u^2)u^2}{2}\right)\, dx - \displaystyle\int_{\Omega}F(u_+)\, dx - \displaystyle\int_{\Omega}hu \, dx.
\end{equation}
In view of (f1)-(f2) there exist $\delta >0$ and $C(\delta) >0$ such that, for all $t \in [0, \infty)$,
\begin{equation}\label{5.5}
F(u_+) \leq \left(\frac{\gamma(0) + (\gamma_{\text{min}} - \delta)\lambda_1 }{2}\right)u_+^2 + C(\delta)u_+^{p + 1}.
\end{equation}
It follows from \eqref{5.4}-\eqref{5.5} that there exist $\delta >0$ and  $C(\delta) >0$ such that
$$\begin{array}{ll}\di\mce (u) &\di\geq \int_{\Omega}\frac{\gamma(0)u^2 +  (\gamma_{\text{min}} - \delta )\lvert \nabla u \rvert^2}{2}\; dx   + \frac{\delta}{2} \int_{\Omega} |\nabla u|^2 dx  -\frac{\gamma(0)+(\gamma_{\text{min}} - \delta)\lambda_1}{2}\intom u_+^2 \, dx \smallskip\\
&\di -
\frac12\intom G(u^2)u^2dx - C(\delta) \int_{\Omega}u_+^{p+1} \, dx - \int_{\Omega}h u \, dx \smallskip \\
&\di \geq  \frac{\delta}{2} \int_{\Omega} |\nabla u|^2 \, dx -
\frac12\intom G(u^2)u^2 \, dx - C(\delta) \int_{\Omega}u_+^{p+1} dx - \int_{\Omega}h u \, dx.
\end{array}$$

We claim, arguing as in \cite{stuart}, that the term $\intom G(u^2)u^2dx$ is negligible with respect with $\intom |\nabla u|^2dx$, provided that $u\in H^1_0(\Omega)$ is such that $\|u\|=r>0$ is small enough. Indeed, we first notice that, fixing an arbitrary $q \in (1, \frac{2^*}{2}),$
\begin{equation}\label{5.6}
0 \leq \int_{\Omega}  G(u^2)u^2 \, dx \leq \left(\displaystyle\int_{\Omega}  G(u^2)^{q'}\, dx \right)^{\frac{1}{q'}}\left(\displaystyle\int_{\Omega}  u^{2q}\, dx\right)^{\frac{1}{q}} \leq  C \left(\displaystyle\int_{\Omega}  G(u^2)^{q'}\, dx\right)^{\frac{1}{q'}} ||u||^2
\end{equation}
 where $1/q + 1/q' =1$ and for some constant $C>0$. Now since $G$ is a non-negative bounded function on $[0, \infty)$, the map  $u \mapsto G(u^2)^{q'}$ is continuous from $L^2\left(\Omega\right)$ into $L^1\left(\Omega\right)$. Since $G(0)=0$ the claim follows from \eqref{5.6}.

At this point, taking $||u||$ small enough and since $p >1$ we obtain that
$$\mce (u) \di\geq \frac{\delta}{4} \int_{\Omega} |\nabla u|^2 \, dx   - \int_{\Omega}h u \, dx \geq \frac{\delta}{4} \int_{\Omega} |\nabla u|^2 \, dx - ||h||_2 ||u||_2.$$
Finally, taking $||h||_2$ small enough the lemma follows.
\end{proof}

\begin{lem}\label{firstsolution}
Assume that $(f1)-(f2)$, $(q2)-(q3)$ holds and that $||h||_2$ is as in Lemma \ref{localminstructure}. Then problem \eqref{pro1} admits a solution which is a local minimizer of $\mathcal{E}$. In addition, this solution is nontrivial if $h \not \equiv 0.$
\end{lem}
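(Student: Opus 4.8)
The plan is to exploit the local linking geometry established in Lemma~\ref{localminstructure} together with the coercivity-type control of $\mce$ on small balls. The idea is standard once that geometry is in hand: on the open ball $B_r = \{u \in H^1_0(\Omega) : \|u\| < r\}$, with $r$ as in Lemma~\ref{localminstructure}, one considers $m := \inf_{u \in \overline{B_r}} \mce(u)$. The first step is to observe that this infimum is attained. Since $\mce$ is weakly sequentially lower semicontinuous by Lemma~\ref{semi}(ii) (here $(q2)$ is invoked), and $\overline{B_r}$ is weakly sequentially compact (being a bounded, closed, convex set in a reflexive Banach space), a minimizing sequence $(u_n)$ has a weakly convergent subsequence $u_n \rightharpoonup u_0 \in \overline{B_r}$ with $\mce(u_0) \le \liminf \mce(u_n) = m$, so $\mce(u_0) = m$.

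Next I would check that $u_0$ lies in the \emph{open} ball, so that it is a genuine local minimizer and hence a critical point. For this, compare $m$ with the value of $\mce$ on the boundary sphere. If $h \not\equiv 0$: evaluate $\mce(tv)$ for a fixed $v \ge 0$ with $v \not\equiv 0$ and $t > 0$ small; since $\Gamma(s) \le \gamma_{\max} s$ gives $\Phi(tv) \le \tfrac{\gamma_{\max}}{2} t^2 \|v\|_{H^1}^2 = O(t^2)$, while $-\int_\Omega h\,(tv)\,dx = -t \int_\Omega h v\,dx < 0$ is of order $t$, one gets $\mce(tv) < 0$ for $t$ small enough. Hence $m \le \mce(tv) < 0$. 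On the other hand Lemma~\ref{localminstructure} gives $\mce(u) \ge \alpha > 0$ for all $u$ with $\|u\| = r$, so the minimizer $u_0$ cannot be on the sphere $\|u\|=r$: we conclude $\|u_0\| < r$. Therefore $u_0$ is a local minimizer of $\mce$ on $H^1_0(\Omega)$, so $\mce'(u_0) = 0$, i.e.\ $u_0$ solves problem \eqref{pro2}; by Lemma~\ref{newproblem} it is a non-negative solution of \eqref{pro1}, and it is nontrivial since $\mce(u_0) = m < 0 = \mce(0)$.

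If $h \equiv 0$ the above dichotomy still produces a local minimizer: either $\|u_0\| < r$, in which case $u_0$ is a local minimizer and we are done, or $\inf_{\overline{B_r}} \mce = 0$ attained only at $u=0$ — but then $u = 0$ is itself a local minimizer of $\mce$ (since $\mce \ge 0$ on $\overline{B_r}$ with equality at $0$), hence a critical point, which is the trivial solution; the statement only claims nontriviality when $h \not\equiv 0$, so this is consistent. Thus in all cases we obtain a solution of \eqref{pro1} that is a local minimizer of $\mce$, non-negative by Lemma~\ref{newproblem}, and nontrivial whenever $h \not\equiv 0$.

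The only delicate point is the attainment of the infimum on $\overline{B_r}$, which rests on the weak lower semicontinuity from Lemma~\ref{semi}(ii); the rest is the elementary comparison $m < 0 = \mce(0)$ versus $\mce|_{\partial B_r} \ge \alpha$, which forces the minimizer into the interior and simultaneously yields nontriviality. I expect no real obstacle here, since all the analytic work (the lower semicontinuity of $\mce$ and the local geometry of $\mce$) has already been done in Lemma~\ref{semi} and Lemma~\ref{localminstructure} respectively.
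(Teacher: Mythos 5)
Your argument is essentially the paper's own: minimize $\mathcal{E}$ over the closed ball $\overline{B(0,r)}$, use the weak sequential lower semicontinuity from Lemma~\ref{semi}(ii) to attain the infimum $m$, and observe that $m \leq \mathcal{E}(0) = 0 < \alpha$ together with Lemma~\ref{localminstructure} forces the minimizer into the open ball, hence makes it a critical point. The only loose point is your small-$t$ expansion of $\mathcal{E}(tv)$ for nontriviality, where the term $-\int_\Omega F(tv_+)\,dx$ is silently dropped; since $f(0)=0$ is the standing assumption (otherwise $u\equiv 0$ is not the trivial solution) one has $F(s)=o(s)$ near $0$ and this term is $o(t)$, so your conclusion $m<0$ stands --- alternatively, nontriviality follows at once because $\mathcal{E}'(0)w=-\int_\Omega hw\,dx$ cannot vanish for all $w$ when $h\gneqq 0$, which is what the paper's ``clearly'' amounts to.
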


\begin{proof}
We set $$m := \underset{u \in \overline{B(0,r)}}{\text{inf}} \mathcal{E}(u)$$
where $r>0$ is given in Lemma \ref{localminstructure}. Now if  $(v_n)_{n \in \mathbb{N}} \subset \overline{B(0,r)}$ is a minimizing sequence it is obviously bounded and we can assume that $v_n \rightharpoonup v$ in $H_0^1(\Omega)$. By Lemma \ref{semi}(ii) we deduce that $\mathcal{E}(v)=m$. The fact that $v$ is a critical point of $\mathcal{E}(u)$ follows since $m \leq \mathcal{E}(0) = 0 < \alpha.$ Clearly,  $v$ is nontrivial if $h \not \equiv 0$.
\end{proof}

\subsection{A mountain pass geometry }

Combined with Lemma \ref{localminstructure}, the next lemma shows that the functional  $\mathcal{E}$ has a mountain pass geometry around the origin if $||h||_2$ is sufficiently small.

\begin{lem}\label{pointoutside}
Assume that $(f2)-(f3)$ holds and let $\varphi_1>0$ be the first eigenfunction of the Laplace operator in $H^1_0(\Omega)$. Then $\mathcal{E} (t\varphi_1)<0$ for all $t\in\RR$ sufficiently large.
\end{lem}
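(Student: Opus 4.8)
The plan is to estimate $\mathcal{E}(t\varphi_1)$ from above for large $t>0$, showing the leading term is negative because the reaction $F$ grows quadratically with a coefficient exceeding $\gamma(\infty)(1+\lambda_1)/2$, while the operator term grows at most like $\gamma(\infty)(1+\lambda_1)/2 \cdot t^2$ plus lower order. Since we only need the claim for $t$ sufficiently large (and $\varphi_1\geq 0$, so $(t\varphi_1)_+ = t\varphi_1$ for $t>0$), I would not need to worry about the sign issue or about negative $t$ in any essential way — I will simply prove it for $t\to+\infty$.

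First I would bound the operator term. Using $(q3)$, for every $\eta>0$ there is $s_\eta$ with $\gamma(s)\leq \gamma(\infty)+\eta$ for $s\geq s_\eta$; hence for all $s\geq 0$, $\Gamma(s)\leq (\gamma(\infty)+\eta)s + C_\eta$ for some constant $C_\eta>0$ (absorbing the bounded discrepancy on $[0,s_\eta]$). Applying this with $s = \tfrac12(t^2\varphi_1^2 + t^2|\nabla\varphi_1|^2)$ and integrating,
$$\int_\Omega \Gamma\Big(\tfrac{t^2\varphi_1^2 + t^2|\nabla\varphi_1|^2}{2}\Big)\,dx \leq \frac{(\gamma(\infty)+\eta)\,t^2}{2}\int_\Omega(\varphi_1^2 + |\nabla\varphi_1|^2)\,dx + C_\eta|\Omega|.$$
Using $\int_\Omega|\nabla\varphi_1|^2 = \lambda_1\int_\Omega\varphi_1^2$, the integral factor equals $(1+\lambda_1)\int_\Omega\varphi_1^2$, so the operator term is at most $\frac{(\gamma(\infty)+\eta)(1+\lambda_1)}{2}t^2\|\varphi_1\|_2^2 + C_\eta|\Omega|$.

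Next I would bound the reaction term from below using $(f3)$: there exist $\mu > \gamma(\infty)(1+\lambda_1)$ and $C>0$ with $f(t)\geq \mu t - C$ for all $t\geq 0$, hence $F(t)\geq \tfrac{\mu}{2}t^2 - Ct$ for $t\geq 0$. Therefore $\int_\Omega F(t\varphi_1)\,dx \geq \tfrac{\mu}{2}t^2\|\varphi_1\|_2^2 - Ct\|\varphi_1\|_1$. Combining, and noting the $h$-term $-\int_\Omega h\,t\varphi_1\,dx$ is $O(t)$,
$$\mathcal{E}(t\varphi_1) \leq \frac{t^2\|\varphi_1\|_2^2}{2}\Big[(\gamma(\infty)+\eta)(1+\lambda_1) - \mu\Big] + O(t) + C_\eta|\Omega|.$$
Choosing $\eta>0$ small enough that $(\gamma(\infty)+\eta)(1+\lambda_1) < \mu$ (possible since $\mu > \gamma(\infty)(1+\lambda_1)$), the bracket is a negative constant, the $t^2$ term dominates the $O(t)$ and constant terms, and so $\mathcal{E}(t\varphi_1)\to -\infty$ as $t\to+\infty$; in particular it is negative for $t$ large.

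There is no serious obstacle here: the only point requiring a little care is the passage from the asymptotic bound $\gamma(s)\to\gamma(\infty)$ to a \emph{global} affine majorant $\Gamma(s)\leq(\gamma(\infty)+\eta)s + C_\eta$, which follows from the boundedness of $\gamma$ on the compact interval $[0,s_\eta]$ given by $(q3)$. (Hypothesis $(f2)$ plays no role in this particular lemma beyond ensuring $\mathcal{E}$ is well defined and $C^1$; it is listed for consistency with the surrounding lemmas.)
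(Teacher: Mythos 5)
Your proof is correct. It follows the same overall strategy as the paper — compare the quadratic growth rate of the $\Gamma$-term against that of $\int_\Omega F(t\varphi_1)\,dx$ and use $(f3)$ to make the latter win — but the two arguments handle the leading term differently. The paper computes the exact limit
$\lim_{t\to\infty} t^{-2}\int_\Omega \Gamma\bigl(\tfrac{t^2}{2}(\varphi_1^2+|\nabla\varphi_1|^2)\bigr)\,dx = \tfrac{1}{2}\gamma(\infty)(1+\lambda_1)\|\varphi_1\|_2^2$ via the dominated convergence theorem (using the pointwise limit $\Gamma(s)/s\to\gamma(\infty)$ and the bound $\Gamma(s)\le\gamma_{\max}s$ as the dominating function), whereas you replace this by the global affine majorant $\Gamma(s)\le(\gamma(\infty)+\eta)s+C_\eta$, which is elementary and yields a quantitative upper bound valid for every $t$ rather than only an asymptotic statement. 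Your version costs an extra $\eta$ in the quadratic coefficient, which you correctly absorb by choosing $\eta$ small enough relative to the gap $\mu-\gamma(\infty)(1+\lambda_1)>0$ provided by $(f3)$; the rest (the lower bound $F(t)\ge\tfrac{\mu}{2}t^2-Ct$, the identity $\int_\Omega|\nabla\varphi_1|^2=\lambda_1\int_\Omega\varphi_1^2$, and the $O(t)$ treatment of the $h$-term) matches the paper. Your parenthetical remarks — that $(t\varphi_1)_+=t\varphi_1$ for $t>0$ and that $(f2)$ is not really used here — are both accurate.
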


\begin{proof}
\smallskip
We have, for any $t>0$,
$$\mce (t\varphi_1)=\intom\Gamma\left(\frac{t^2}{2}\, (\varphi_1^2+|\nabla\varphi_1|^2) \right) \, dx-\intom F(t\varphi_1) \,dx - t \intom h \varphi_1 \,dx.$$
Observe that, since $\varphi_1 >0$, we get for any $x \in \Omega$,
$$
\lim_{t\ri\infty}\frac{\Gamma\left(\frac{t^2}{2}\, (\varphi_1(x)^2+|\nabla\varphi_1(x)|^2) \right)}{t^2}=\frac{\gamma(\infty)}{2}[\varphi_1(x)^2+|\nabla\varphi_1(x)|^2].
$$
Also, since $\gamma$ is bounded, we have for all $x \in \Omega$ and for some constant $C>0$,
$$
\lim_{t\ri\infty}\frac{\Gamma\left(\frac{t^2}{2}\, (\varphi_1(x)^2+|\nabla\varphi_1(x)|^2) \right)}{t^2} \leq C [\varphi_1(x)^2+|\nabla\varphi_1(x)|^2].
$$
Thus, by the Lebesgue dominated convergence theorem, we deduce that
\begin{equation}\label{ajout1}
\begin{aligned}
\lim_{t\ri\infty} \frac{1}{t^2}\intom \Gamma\left(\frac{t^2}{2}\, (\varphi_1(x)^2+|\nabla\varphi_1(x)|^2) \right) \, dx & = \frac12\intom\gamma(\infty)[\varphi_1(x)^2+|\nabla\varphi_1(x)|^2] \, dx \\
& = \frac12 \gamma(\infty)(1+ \lambda_1)\intom \varphi_1(x)^2 \, dx.
\end{aligned}
\end{equation}
Next, by $(f3)$, there are $\delta>0$ and $C>0$ such that for all $t>0$
$$F(t)\geq\frac{\gamma(\infty)(1+\lambda_1+\delta)}{2}\,t^2-C.$$
It follows that
\begin{equation}\label{ajout2}
\limsup_{t\ri\infty}\frac{1}{t^2}\intom F(t\varphi_1(x))dx\geq \frac{\gamma(\infty)(1+\lambda_1+\delta)}{2}\intom\varphi_1(x)^2 \,dx.
\end{equation}
From \eqref{ajout1} and \eqref{ajout2} we deduce that
$$\limsup_{t\ri\infty}\frac{\mce (t\varphi_1)}{t^2} \leq -\frac{\delta\gamma(\infty)}{2}\intom\varphi_1(x)^2dx$$
%$$\begin{array}{ll}\di\limsup_{t\ri\infty}\frac{\mce (t\varphi_1)}{t^2}&\di\leq -\frac{\lambda_1+\delta}{2}\,\gamma(\infty)\intom\varphi_1(x)^2dx+\frac{\gamma(\infty)}{2}\intom|\nabla\varphi_1(x)|^2dx\\ &\di =
%-\frac{\delta\gamma(\infty)}{2}\intom\varphi_1(x)^2dx\,\end{array}$$
proving the lemma.
\end{proof}

\subsection{Existence of a suitable Palais-Smale sequence and proof of Theorem \ref{thm2}}

In view of Lemmas \ref{localminstructure} and \ref{pointoutside}, the functional $\mathcal{E}$ has a mountain pass geometry. The rest of the paper will be devoted to find a critical point at the mountain pass level and this will end the proof of Theorem \ref{thm2}. Actually we shall exhibit a particular Palais-Smale sequence, at this level, for which it is possible to show its convergence (to a critical point thus). To this aim the strategy,  first presented in \cite{Je},  and which consists in embedding the problem into a family of problems will be put at work. \medskip

We set the continuous functions $f_1 : [0, \infty) \rightarrow \mathbb{R}$ and $f_2 : [0, \infty) \rightarrow \mathbb{R}$ such that, for all $t \in [0, \infty)$,
$$f_1(t) = \text{max}\left(f(t),0\right) \qquad \text{and} \qquad f_2(t) = f_1(t) - f(t).$$
By definition $f_1(t) \geq 0$ for all $t \in [0, \infty)$. Also since $(f3)$  implies that $f(t) \geq 0$ for $t \in [0, \infty)$ sufficiently large, we see that $f_2(t) =0$ for $t$ large and thus there exists a $K < \infty$ such that, for all $ t \in [0, \infty)$
\begin{equation}\label{l1}
F_1(t) := \int_0^{t}f_1(s)\, ds \geq 0  \quad \mbox{and} \quad F_2(t) := \int_0^{t}f_2(s)\, ds \geq - K.
\end{equation}
Obviously we have
$$\displaystyle\int_{\Omega}F(u_+)\, dx = \displaystyle\int_{\Omega}F_1(u_+)\, dx - \displaystyle\int_{\Omega}F_2(u_+)\, dx.$$
Let $J := (1- \alpha, 1 + \alpha)$ for some $\alpha >0$ small. We define the family of $C^1$-functionals $(\mathcal{E}_{\mu})_{\mu \in J}$ on $H_0^1\left(\Omega\right)$ by
$$\mathcal{E}_{\mu}(u) = \displaystyle\int_{\Omega} \Gamma\left(\frac{u^2 + \lvert \nabla u \rvert^2}{2}\right) \; dx - \mu\displaystyle\int_{\Omega}F_1(u_+)\; dx + \displaystyle\int_{\Omega}F_2(u_+)\; dx - \displaystyle\int_{\Omega} hu\; dx.$$
Setting $A$ and $B$ such that :
$$\forall u \in H_0^1\left(\Omega\right),\; A(u) = \displaystyle\int_{\Omega} \Gamma\left(\frac{u^2 + \lvert \nabla u \rvert^2}{2}\right) \; dx   + \displaystyle\int_{\Omega}F_2(u_+)\; dx - \displaystyle\int_{\Omega} hu\; dx$$
$$\text{and}\quad B(u) = \displaystyle\int_{\Omega}F_1(u_+)\; dx,$$
we see that the family of $C^1$-functionals $(\mathcal{E}_{\mu})_{\mu \in J}$ can be written as
$$\forall u \in H_0^1\left(\Omega\right),\; \mathcal{E}_{\mu}(u) = A(u) - \mu B(u).$$
Let us show that this family satisfies the assumptions of Theorem \ref{Jeanjean}. First we see from \eqref{l1} that $B(u) \geq 0$, for all $u \in H_0^1(\Omega)$. Now, using $(q3)$ and again  \eqref{l1}, if follows that, for all $u \in H_0^1(\Omega)$
$$ A(u) \geq \frac{\gamma(\infty)}{2}\lVert u \rVert^2 - \lVert h \rVert_2\lVert u \rVert_2 + K$$
and thus it holds that $A(u) \to + \infty$ as $||u|| \to \infty.$ \smallskip

Finally, inspecting the proofs of Lemmas \ref{localminstructure} and \ref{pointoutside},  we see that the conclusions of these results still hold if we replace $\mathcal{E}$ by $\mathcal{E}_{\mu}$ with $\mu \in J= (1- \alpha, 1 + \alpha)$ for a sufficiently small $\alpha >0$. Thus our family $(\mathcal{E}_{\mu})_{\mu \in J}$ satisfies the assumptions of Theorem \ref{Jeanjean} and we deduce the existence of an increasing sequence $(\mu_n)_{n \in N}$ such that $\mu_n \to 1$ and $\mathcal{E}_{\mu_n}$ has a bounded Palais-Smale sequence at the mountain pass level $c_{\mu_n}$, for any $n \in \NN$. \medskip

The following lemma will be crucial to establish the convergence of the bounded Palais-Smale sequences for $\mathcal{E}_{\mu}$ with $\mu \in J$.

\begin{lem} \label{Brezis-Lieb}
Assume that $(q3)-(q4)$ hold. Let $(u_n)_{n \in \N} \subset H^1_0(\Omega)$, satisfying $u_n \rightharpoonup u$ in $H^1_0(\Omega)$ and $\nabla u_n \to \nabla u$ a.e. on $\Omega$, be such that
\begin{equation}\label{112}
\limsup_{n \to \infty}  \Phi(u_n) \leq \Phi(u).
\end{equation}
Then, up to a subsequence, $u_n \to u$ strongly in $H^1_0(\Omega)$.
\end{lem}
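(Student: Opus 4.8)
The plan is to exploit a Brezis--Lieb type splitting for the convex functional $\Phi$. Write $y_n = (u_n, \nabla u_n)$ and $y = (u, \nabla u)$ in $\R^{N+1}$, and recall from \eqref{5.1}--\eqref{x} that $\Phi(v) = \int_\Omega g(|z|/\sqrt 2)\,dx$ with $g(s) = \Gamma(s^2)$, and that $g$ is convex and nondecreasing by $(q4)$, with $g(s) \geq \gamma_{\min} s^2$ for all $s \geq 0$ by $(q3)$. Since $\nabla u_n \to \nabla u$ a.e. and $u_n \to u$ a.e. (via the compact Sobolev embedding, after passing to a subsequence), we have $y_n(x) \to y(x)$ a.e. in $\Omega$, hence $g(|y_n(x)|/\sqrt2) \to g(|y(x)|/\sqrt2)$ a.e. The first step is therefore to apply Fatou's lemma to the nonnegative integrands $g(|y_n|/\sqrt2) \geq 0$, which gives $\Phi(u) \leq \liminf_{n} \Phi(u_n)$; combined with the hypothesis \eqref{112} this yields $\Phi(u_n) \to \Phi(u)$, i.e. convergence of the $L^1$-norms of the nonnegative functions $g(|y_n|/\sqrt2)$ together with a.e. convergence.

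The second step is to invoke the classical Brezis--Lieb lemma \cite{BrLi} (in the simple form: if $f_n \to f$ a.e., $f_n \geq 0$, and $\int f_n \to \int f < \infty$, then $\int |f_n - f| \to 0$) to conclude that $g(|y_n|/\sqrt2) \to g(|y|/\sqrt2)$ strongly in $L^1(\Omega)$. Equivalently the sequence $\big(g(|y_n|/\sqrt2)\big)_n$ is uniformly integrable. Now I would transfer this uniform integrability to the sequence $|\nabla u_n|^2$: from $g(s) \geq \gamma_{\min} s^2$ we get
$$
\gamma_{\min}\,\frac{|y_n|^2}{2} \;\leq\; g\!\left(\frac{|y_n|}{\sqrt2}\right),
$$
and since $|y_n|^2 = u_n^2 + |\nabla u_n|^2 \geq |\nabla u_n|^2$, the uniform integrability of the right-hand side forces the uniform integrability of $|\nabla u_n|^2$ on $\Omega$. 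Together with $|\nabla u_n|^2 \to |\nabla u|^2$ a.e. and Vitali's convergence theorem, this gives $\int_\Omega |\nabla u_n|^2\,dx \to \int_\Omega |\nabla u|^2\,dx$, that is $\|u_n\| \to \|u\|$.

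The final step is routine: in the Hilbert space $H^1_0(\Omega)$ we have $u_n \rightharpoonup u$ weakly and $\|u_n\| \to \|u\|$, hence $u_n \to u$ strongly. The main obstacle I anticipate is the passage from control of $g(|y_n|/\sqrt2)$ to control of $|\nabla u_n|^2$: one must be careful that $g$ only grows quadratically at worst (this is exactly what $(q3)$, $\gamma \leq \gamma_{\max}$, guarantees, and what prevents a direct comparison from the other side), so the useful inequality is the lower bound $g(s) \geq \gamma_{\min} s^2$, which is enough because we only need uniform integrability (an upper-domination statement), not a pointwise upper bound on $|\nabla u_n|^2$ by the integrand. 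One should also note that all "up to a subsequence" reductions (for the a.e. convergences and for the Brezis--Lieb conclusion) are compatible, since each is extracted once and for all at the start.
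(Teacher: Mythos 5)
Your proof is correct, and it takes a genuinely different route from the paper's. The paper applies the full Brezis--Lieb theorem \cite[Theorem 2]{BrLi} to the convex integrand $j(s)=\Gamma(s^2)$ and the scalar decomposition $f_n=f+g_n$ with $f_n=\bigl(\tfrac{u_n^2+|\nabla u_n|^2}{2}\bigr)^{1/2}$, obtaining the splitting $\Phi(u_n)-\Phi(u_n-u)-\Phi(u)\to 0$; combined with \eqref{112} and $\Gamma(t)\ge \gamma_{\min}t$ this yields $\Phi(u_n-u)\to 0$ and hence strong convergence. You instead use only the elementary form of Brezis--Lieb (Fatou's lemma, then the Scheff\'e-type statement that a.e.\ convergence plus convergence of the integrals of nonnegative functions gives $L^1$ convergence), transfer the resulting uniform integrability to $|\nabla u_n|^2$ through the same lower bound $\Gamma(t)\ge\gamma_{\min}t$, invoke Vitali to get $\|u_n\|\to\|u\|$, and conclude by weak convergence plus norm convergence in the Hilbert space $H^1_0(\Omega)$. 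Each step is sound: domination by a uniformly integrable sequence does preserve uniform integrability, and the finiteness of $|\Omega|$ makes Vitali applicable. Your route is more elementary and in one respect cleaner: it sidesteps the identification of $\int_\Omega j(g_n)\,dx$ with $\Phi(u_n-u)$, which in the paper's argument deserves a word of care since $g_n=f_n-f$ is a difference of moduli rather than the modulus of $(u_n-u,\nabla(u_n-u))$. What the paper's approach buys is the explicit statement $\Phi(u_n-u)\to 0$, marginally stronger information than norm convergence, though both suffice here. As a further simplification of your own argument, you could skip Scheff\'e and Vitali entirely by writing the integrand as the sum of the two nonnegative pieces $\gamma_{\min}|y_n|^2/2$ and $\Gamma(|y_n|^2/2)-\gamma_{\min}|y_n|^2/2$, applying Fatou to each, and using that the sum of the integrals converges to the sum of the limits, which forces $\int_\Omega |y_n|^2\,dx\to\int_\Omega|y|^2\,dx$ directly.
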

\begin{proof}
We make uses of classical results from \cite{BrLi}. Adopting the notations introduced there, we write $f_n = f + g_n$ with
$$ f_n = \Big( \frac{u_n^2 + |\nabla u_n|^2}{2}\Big)^{\frac{1}{2}} \quad \mbox{and} \quad f = \Big( \frac{u^2 + |\nabla u|^2}{2}\Big)^{\frac{1}{2}}.$$
Note that, passing if necessary to a subsequence still denoted $(u_n)_{n \in \N}$, we have $g_n \to 0$ a.e.  Now setting $j(s) := \Gamma(s^2)$ we know, from $(q3)$, that the function $j$ is continuous, convex on $\R$ with $j(0)=0$. In view of \cite[Example (b)]{BrLi}, where we take $k=2$, we can apply \cite[Theorem 2]{BrLi} to deduce that
$$\int_{\Omega} j(f+g_n) - j(g_n) - j(f) \, dx \to 0.$$
Namely that
$$ \Phi(u_n) - \Phi(u_n -u) - \Phi(u) \to 0.$$
Now, in view of \eqref{112}, we deduce that
\begin{equation}\label{term}
 \Phi(u_n -u) \to 0.
\end{equation}
At this point, recording that $\Gamma(0)=0$ and $\Gamma'(s) = \gamma(s) \geq \gamma_{\text{min}}>0$ and thus that $\Gamma(s) \geq \gamma_{\text{min}} \, s$ we deduce from \eqref{term} that $u_n \to u$ in $H^1_0(\Omega)$.
\end{proof}

\begin{lem} \label{boundedsequence}
Assume that $(f1)-(f2)$ and $(q3)-(q4) $ are fulfilled.
Every bounded Palais-Smale sequence for $\mathcal{E}_{\mu}$ with $\mu \in J$  admits a convergent subsequence.
\end{lem}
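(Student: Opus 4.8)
\textbf{Proof plan for Lemma \ref{boundedsequence}.}
The plan is to take a bounded Palais-Smale sequence $(u_n)_{n \in \N}$ for $\mathcal{E}_{\mu}$, extract a weakly convergent subsequence $u_n \rightharpoonup u$ in $H^1_0(\Omega)$, and show that in fact $u_n \to u$ strongly. The weak limit exists by boundedness; moreover, by the compactness of the Sobolev embedding $H^1_0(\Omega) \hookrightarrow L^q(\Omega)$ for $q \in [2, 2^*)$, we have $u_n \to u$ strongly in all such $L^q(\Omega)$ and $u_n(x) \to u(x)$ a.e., after passing to a further subsequence. The first substantive step is to identify the limiting equation: testing $\mathcal{E}_{\mu}'(u_n) \to 0$ against a fixed $v \in H^1_0(\Omega)$ and passing to the limit — using the a.e. convergence together with $(f2)$ and the growth control it gives on $f_1, f_2$ to handle the reaction terms by dominated convergence — one expects to be able to show that $u$ is a weak solution of the $\mu$-problem, i.e. $\mathcal{E}_{\mu}'(u) = 0$. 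This requires first knowing something about the convergence of the quasilinear term, so I would actually postpone a fully rigorous identification and instead extract from it just what is needed below.

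The key is to produce the hypothesis \eqref{112} of Lemma \ref{Brezis-Lieb}, namely $\limsup_n \Phi(u_n) \le \Phi(u)$, and the a.e. gradient convergence $\nabla u_n \to \nabla u$. For the latter, the route is Lemma \ref{LEM3}: it suffices to show $\Phi'(u_n)(u_n - u) \to 0$. To get this, write $\mathcal{E}_{\mu}'(u_n)(u_n - u) \to 0$ (since the sequence is bounded and $\mathcal{E}_{\mu}'(u_n) \to 0$ in $H^{-1}$), and observe that
$$\mathcal{E}_{\mu}'(u_n)(u_n - u) = \Phi'(u_n)(u_n - u) - \mu \int_\Omega f_1(u_{n,+})(u_n - u)\,dx + \int_\Omega f_2(u_{n,+})(u_n-u)\,dx - \int_\Omega h(u_n - u)\,dx.$$
The last three integrals tend to $0$: by $(f2)$ the functions $f_1(u_{n,+}), f_2(u_{n,+})$ are bounded in $L^{p'}$-type spaces (with exponents conjugate to a subcritical power), while $u_n - u \to 0$ strongly in the matching $L^q(\Omega)$, and $u_n - u \rightharpoonup 0$ pairs against $h \in L^2$. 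Hence $\Phi'(u_n)(u_n - u) \to 0$, and Lemma \ref{LEM3} yields $\nabla u_n(x) \to \nabla u(x)$ a.e. in $\Omega$.

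For the $\limsup$ condition, I would use the weak sequential lower semicontinuity of $\mathcal{E}_{\mu}$ — which holds by the same argument as in Lemma \ref{semi}(ii), since $\Phi$ is weakly sequentially l.s.c. under $(q2)$ (implied by $(q4)$) and the lower-order terms converge — so that $\mathcal{E}_{\mu}(u) \le \liminf_n \mathcal{E}_{\mu}(u_n)$; in fact, since $(u_n)$ is a Palais-Smale sequence one also has $\mathcal{E}_{\mu}(u_n) \to c_{\mu}$. Combining $\mathcal{E}_{\mu}(u) \le c_{\mu}$ with the reasoning that, using the already-established a.e.\ convergence of $\nabla u_n$ and $u_n$ together with $(f2)$, the reaction and $h$ terms in $\mathcal{E}_{\mu}(u_n)$ converge to their values at $u$ (for the $F_1$ term one can invoke Brezis-Lieb again, and the $F_2$ term is bounded below and converges by dominated convergence on the region where $f_2$ is supported, which is bounded), one isolates $\Phi(u_n)$ and deduces $\limsup_n \Phi(u_n) \le \Phi(u)$. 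Then Lemma \ref{Brezis-Lieb} applies and gives $u_n \to u$ strongly in $H^1_0(\Omega)$, up to a subsequence. I expect the main obstacle to be the bookkeeping needed to show the reaction terms split cleanly under weak/a.e.\ convergence — in particular getting $\int_\Omega F_1(u_{n,+})\,dx \to \int_\Omega F_1(u_+)\,dx$ with only the subcritical growth $(f2)$ and no sign help from $F_1$ — and making sure the Brezis-Lieb splitting $\Phi(u_n) - \Phi(u_n - u) - \Phi(u) \to 0$ is invoked consistently with the a.e.\ gradient convergence just proved.
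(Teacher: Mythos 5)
Your first two steps match the paper exactly: extract $u_n \rightharpoonup u$ with strong $L^q$ convergence for $q\in[2,2^*)$, use $\mathcal{E}_{\mu}'(u_n)(u_n-u)\to 0$ together with the convergence of the reaction and $h$ terms to get $\Phi'(u_n)(u_n-u)\to 0$, and invoke Lemma \ref{LEM3} for the a.e.\ gradient convergence. The final step, feeding \eqref{112} into Lemma \ref{Brezis-Lieb}, is also the paper's. The gap is in how you propose to obtain \eqref{112} itself. Weak sequential lower semicontinuity of $\mathcal{E}_{\mu}$, combined with $\mathcal{E}_{\mu}(u_n)\to c_{\mu}$ and the convergence of the lower-order terms, yields $\mathcal{E}_{\mu}(u)\le c_{\mu}$, hence
\begin{equation*}
\Phi(u) \;\le\; c_{\mu} + \mu\int_{\Omega}F_1(u_+)\,dx - \int_{\Omega}F_2(u_+)\,dx + \int_{\Omega}hu\,dx \;=\; \lim_{n\to\infty}\Phi(u_n),
\end{equation*}
which is $\Phi(u)\le\liminf_n\Phi(u_n)$ --- the \emph{opposite} inequality to \eqref{112}. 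Lower semicontinuity can never produce $\limsup_n\Phi(u_n)\le\Phi(u)$; it only reproduces the trivial direction, so as written this step fails and Lemma \ref{Brezis-Lieb} cannot be applied.

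The fix is short and uses an ingredient you already have. The convexity inequality \eqref{111} holds for every pair of points; applying it with the roles reversed, i.e.\ with base point $u_n$ rather than the weak limit, gives $\Phi(u) - \Phi(u_n) - \Phi'(u_n)(u-u_n)\ge 0$, so $\Phi(u_n)\le\Phi(u)-\Phi'(u_n)(u-u_n)$. Since you have already shown $\Phi'(u_n)(u-u_n)\to 0$, this yields $\limsup_n\Phi(u_n)\le\Phi(u)$ immediately. This is exactly what the paper does, and it makes your worries about the convergence of $\int_{\Omega}F_1((u_n)_+)\,dx$ (and the Brezis--Lieb splitting of the reaction term) moot: the reaction terms are only needed once, to pass from $\mathcal{E}_{\mu}'(u_n)(u_n-u)\to 0$ to $\Phi'(u_n)(u_n-u)\to 0$, where the subcritical growth $(f2)$ and strong $L^q$ convergence suffice. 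Note also that the lemma concerns an arbitrary bounded Palais--Smale sequence, so you should not assume the energies converge to the mountain pass level $c_{\mu}$.
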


\begin{proof}
Let $(u_n)_{n \in \N} \subset H^1_0(\Omega)$ be a bounded Palais-Smale sequence for $\mathcal{E}_{\mu}$. We can assume without restriction that $u_n \rightarrow u$ weakly in $H_0^1(\Omega)$ and $u_n \to u$ in $L^q(\Omega)$, for any $q \in [2, 2^*[$.  Using that $\mathcal{E}_{\mu}'(u_n)(u-u_n) \to 0$ it readily follows using the strong convergence properties that $\Phi'(u_n)(u-u_n) \to 0 $ from which we deduce, see Lemma \ref{LEM3}, that
\begin{equation}\label{110}
\nabla u_n(x) \rightarrow \nabla u(x), \quad \mbox{a.e. in } \Omega.
\end{equation}
Also, from \eqref{111}, where we have set $u =u$ and $v = u_n$, we know that
\begin{equation}\label{ajout3}
\Phi(u) - \Phi(u_n) - \Phi'(u_n)(u- u_n) \geq 0.
\end{equation}
This implies that
\begin{equation}\label{120}
 \limsup_{n \to \infty} \Phi(u_n) \leq \Phi(u).
\end{equation}
At this point, in view of \eqref{110} and \eqref{120}, the conclusion follows from Lemma \ref{Brezis-Lieb}.
\end{proof}

In view of Lemma \ref{boundedsequence} we deduce the existence of a sequence $\left(\mu_n,u_n\right)_{n \in \mathbb{N}} \subset J \times H_0^1\left(\Omega\right)$ such that :
\begin{itemize}
\item $\mu_n \underset{n \rightarrow \infty}{\longrightarrow} 1$ and $(\mu_n)_{n \in \mathbb{N}}$ is increasing.
\item  $u_n \geq 0$, \, $\mathcal{E}_{\mu_n}(u_n) = c_{\mu_n}$ and $\mathcal{E}'_{\mu_n}(u_n) = 0$ in the dual $H_0^{-1}\left(\Omega\right)$.
\end{itemize}

\begin{prop}\label{sequencebounded}
The sequence $(u_n)_{n \in \NN}$ is bounded.
\end{prop}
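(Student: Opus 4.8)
The plan is to show that the sequence $(u_n)_{n\in\NN}$ of critical points of $\mathcal{E}_{\mu_n}$, lying at the mountain pass levels $c_{\mu_n}$, cannot blow up in norm. First I would record the two relations available: since $\mathcal{E}'_{\mu_n}(u_n)=0$ we have, testing with $u_n$ itself,
$$\int_{\Omega}\gamma\Big(\frac{u_n^2+|\nabla u_n|^2}{2}\Big)(u_n^2+|\nabla u_n|^2)\,dx = \mu_n\int_{\Omega}f_1(u_n)u_n\,dx - \int_{\Omega}f_2(u_n)u_n\,dx + \int_{\Omega}hu_n\,dx,$$
and since $\mathcal{E}_{\mu_n}(u_n)=c_{\mu_n}$ we have $\Phi(u_n)-\mu_n\int_\Omega F_1(u_n)\,dx+\int_\Omega F_2(u_n)\,dx-\int_\Omega hu_n\,dx = c_{\mu_n}$. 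A preliminary observation is that $c_{\mu_n}$ is bounded: since $\mu\mapsto c_\mu$ is monotone on $J$ (the map $\mu\mapsto\mathcal{E}_\mu$ is nonincreasing because $B(u)\ge 0$) and $J$ is a bounded interval, $\sup_n c_{\mu_n}<\infty$; also $\mu_n\ge 1-\alpha>0$.

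Next I would argue by contradiction: suppose $\|u_n\|\to\infty$ and set $v_n := u_n/\|u_n\|$, so $v_n\rightharpoonup v$ in $H^1_0(\Omega)$, $v_n\to v$ in $L^q(\Omega)$ for $q\in[2,2^*)$, and $v_n\to v$ a.e. The key step is to divide the energy identity by $\|u_n\|^2$ and pass to the limit. Since $\gamma$ is bounded between $\gamma_{\min}$ and $\gamma_{\max}$, one has $\gamma_{\min}\le \Phi(u_n)/\frac12\|u_n\|^2\cdot(\text{stuff})$, and more precisely $\frac{\gamma_{\min}}{2}\|u_n\|^2\le \Phi(u_n)\le \frac{\gamma_{\max}}{2}(\|u_n\|^2+\|u_n\|_2^2)$. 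Because of the linear-growth bounds $(f3)$–$(f4)$, the quotients $F_1(u_n)/\|u_n\|^2$ and $f_1(u_n)u_n/\|u_n\|^2$ are controlled by $v_n^2$ up to $o(1)$ terms (the constant parts of the quadratic lower/upper bounds become negligible after division by $\|u_n\|^2\to\infty$), and similarly $F_2, f_2$ contribute only $o(1)$ since they are compactly supported hence bounded. Dividing $\mathcal{E}_{\mu_n}(u_n)=c_{\mu_n}$ by $\|u_n\|^2$ and using $c_{\mu_n}$ bounded gives, in the limit,
$$\lim_{n\to\infty}\frac{1}{\|u_n\|^2}\Big[\Phi(u_n) - \mu_n\int_\Omega F_1(u_n)\,dx\Big] = 0.$$

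Then I would combine this with the testing identity divided by $\|u_n\|^2$. The natural quantity to study is $2\mathcal{E}_{\mu_n}(u_n) - \mathcal{E}'_{\mu_n}(u_n)u_n$, which up to the compactly-supported $f_2$-terms and the $h$-term (both $o(\|u_n\|^2)$) equals
$$\int_{\Omega}\Big[2\Gamma\Big(\tfrac{u_n^2+|\nabla u_n|^2}{2}\Big) - \gamma\Big(\tfrac{u_n^2+|\nabla u_n|^2}{2}\Big)(u_n^2+|\nabla u_n|^2)\Big]dx - \mu_n\int_\Omega\big(2F_1(u_n)-f_1(u_n)u_n\big)dx.$$
Here I expect that the bracket involving $\Gamma$ stays bounded: because $t\mapsto\Gamma(t^2)$ is convex with $\Gamma(0)=0$, one has $0\le \Gamma'(t)t\le 2\Gamma(t)$… actually the cleaner route avoids this. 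Instead, the cleanest contradiction comes from the $L^2$ side: from $(f3)$ there are $\delta>0$, $C>0$ with $f_1(t)t\ge \gamma(\infty)(1+\lambda_1+\delta)t^2 - C$ for $t\ge 0$, while the left side of the testing identity is at most $\gamma_{\max}(\|u_n\|^2+\|u_n\|_2^2)\le \gamma_{\max}(1+1/\lambda_1)\|u_n\|^2\cdot$ — no, $\gamma$ converges to $\gamma(\infty)$, so one must use $(q3)$ more carefully: split the domain into where $|y_n|$ is large (there $\gamma\approx\gamma(\infty)$) and where it is bounded (negligible after scaling). Passing to the limit in the testing identity divided by $\|u_n\|^2$ then yields
$$\gamma(\infty)\int_\Omega(v^2+|\nabla v|^2)\,dx = \mu\cdot\gamma(\infty)(1+\lambda_1+\delta)\int_\Omega v^2\,dx + (\text{possibly an excess})$$
with $\mu = 1$; combined with $\|\nabla v\|_2^2\ge\lambda_1\|v\|_2^2$ (Poincaré) this forces $v\equiv 0$, after which the energy relation $\lim\Phi(u_n)/\|u_n\|^2 = \lim\mu_n\int F_1(u_n)/\|u_n\|^2$ collapses — the right side $\to 0$ while the left side is $\ge\gamma_{\min}/2>0$, a contradiction. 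The main obstacle, and the step I would spend the most care on, is handling the nonhomogeneity of $\Gamma$ and $\gamma$ after the rescaling $v_n = u_n/\|u_n\|$: unlike the $p$-Laplacian case, $\Gamma(t^2)/\|u_n\|^2$ does not converge to a fixed homogeneous functional of $v_n$, so one must exploit $\gamma(s)\to\gamma(\infty)$ and the domination $\gamma_{\min}\le\gamma\le\gamma_{\max}$ together with a dominated-convergence / Fatou argument on the sets $\{|y_n|\to\infty\}$ versus $\{|y_n|\text{ bounded}\}$ to extract the correct asymptotic coefficient $\gamma(\infty)$; this is precisely where assumptions $(q3)$ and $(f3)$ interact, mirroring the linear-operator/asymptotically-linear-nonlinearity analysis alluded to in the introduction.
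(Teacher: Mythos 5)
Your skeleton (argue by contradiction, rescale, exploit $\gamma(s)\to\gamma(\infty)$ on the set where $u_n^2+|\nabla u_n|^2$ is large versus where it is bounded) is in the right spirit, but the decisive step does not go through. You try to reach the contradiction from the identity $\mathcal{E}'_{\mu_n}(u_n)u_n=0$ divided by $\|u_n\|^2$. Two things break. First, $(f3)$ is only a \emph{lower} bound on $\liminf_{t\to\infty}f(t)/t$; the limit of $f(t)/t$ need not exist, so the "limit equation'' you write down, with the coefficient $\gamma(\infty)(1+\lambda_1+\delta)$ appearing as an equality, is not available — Fatou gives you a lower bound for the right-hand side, while the upper bound only involves the (possibly huge) constant from $(f4)$, and two one-sided bounds pointing the same way yield nothing. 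Second, even if you grant yourself that identity, the conclusion would be $\|\nabla v\|_2^2\ge(\lambda_1+\delta)\|v\|_2^2$, which is perfectly compatible with Poincar\'e's inequality $\|\nabla v\|_2^2\ge\lambda_1\|v\|_2^2$ (any nonnegative function with large Rayleigh quotient satisfies both); it does not force $v\equiv 0$. Worse, your plan is internally inconsistent: with your normalization $\|v_n\|=1$ the left-hand side of the rescaled identity converges to $\gamma(\infty)(\|v\|_2^2+1)\ge\gamma(\infty)>0$ while the right-hand side is $O(\|v\|_2^2)$, so this very identity shows $v\not\equiv 0$ — the case you intended to reduce to, and then contradict via $\Phi(u_n)/\|u_n\|^2\ge\gamma_{\min}/2$, cannot occur. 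The contradiction is therefore never reached.

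The paper's proof makes two different choices that are precisely what is missing. It normalizes in $L^2$, setting $v_n=u_n/\|u_n\|_2$, so that $\|v\|_2=1$ and the weak limit is automatically nontrivial and nonnegative (boundedness of $(v_n)$ in $H^1_0$ then follows from the $u_n$-test together with $(f4)$). And — this is the key idea — it tests $\mathcal{E}'_{\mu_n}(u_n)=0$ against the fixed positive eigenfunction $\varphi_1/\|u_n\|_2$ rather than against $u_n$. The left-hand side is then \emph{linear} in $v_n$, so Proposition \ref{lappendice} upgrades the weak convergence to genuine convergence toward $\gamma(\infty)(1+\lambda_1)\int_\Omega v\varphi_1\,dx$, with no loss of gradient mass; on the right-hand side the signs $u_n\ge 0$, $v\ge0$, $\varphi_1>0$ let the one-sided bound $f(t)\ge At-B$ with $A>\gamma(\infty)(1+\lambda_1)$ produce $\gamma(\infty)(1+\lambda_1)\int_\Omega v\varphi_1\,dx\ge A\int_\Omega v\varphi_1\,dx$ with $\int_\Omega v\varphi_1\,dx>0$ — an actual contradiction. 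This eigenfunction test is the standard device for asymptotically linear problems and is the step you need to insert.
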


\begin{proof}
Using $u_n$ as a test function, we get that :
$$ \mathcal{E}'_{\mu_n}(u_n)u_n = \displaystyle\int_{\Omega}\gamma\left(\frac{u_n^2 + \lvert \nabla u_n \rvert^2}{2}\right)(u_n^2 + \lvert \nabla u_n\rvert^2)\, dx  - \mu_n \displaystyle\int_{\Omega} f_1((u_n)_+)u_n \, dx$$ $$+ \displaystyle\int_{\Omega}f_2((u_n)_+)u_n\, dx  - \displaystyle\int_{\Omega}hu_n\, dx = 0.$$
Thus, using $(q3)$,
\begin{equation}\label{l2}
\gamma(\infty)\displaystyle\int_{\Omega}\lvert \nabla u_n\rvert^2\, dx  \leq \mu_n \displaystyle\int_{\Omega} f_1((u_n)_+)u_n\, dx -
\displaystyle\int_{\Omega}f_2((u_n)_+)u_n\, dx  + \displaystyle\int_{\Omega}hu_n\, dx.
\end{equation}
Observe that if we know that $(u_n)_{n \in \NN}$  is bounded in $L^2(\Omega)$, \eqref{l2} in combination with the linear growth of $f$ imply that $(u_n)_{n \in \NN}$  is bounded in $H^1_0(\Omega)$, too. Let us thus prove that $(u_n)_{n \in \NN}$ is bounded in $L^2(\Omega).$  Arguing by contradiction, we assume that $\|u_n\|_2\ri\infty$. Set $v_n=u_n/\|u_n\|_2$, hence $\|v_n\|_2=1$. Let us show  that $(v_n)_{n \in \NN}$ is bounded in $H^1_0(\Omega)$. Dividing $\mathcal{E}'_{\mu_n}(u_n)u_n = 0$ by $||u_n||_2^2$ we get
$$\begin{array}{ll} \label{l3}
 \di\intom\gamma\left(\frac{u_n^2+|\nabla u_n|^2}{2}\right) \big( v_n^2 + |\nabla v_n|^2 \big) \, dx & =  \mu_n \displaystyle\int_{\Omega} f_1((u_n)_+)\, \frac{u_n}{||u_n||_2^2}\, dx   \smallskip\\
& - \displaystyle\int_{\Omega} f_2((u_n)_+)\, \frac{u_n}{||u_n||_2^2}\, dx - \int_{\Omega} h \, \frac{u_n}{||u_n||_2^2} \, dx.
\end{array}$$
But, by $(f4)$, $\|\mu_n f_1((u_n)_+) - f_2((u_n)_+)\|_2 \leq C(1+\|u_n\|_2)$ and thus the right-hand side is bounded.  Since $\gamma(s) \geq \gamma(\infty) >0$ for any $s \in [0, \infty)$ we deduce that  $(v_n)_{n \in \NN}$ is bounded in $H_0^1(\Omega)$. So, up to a subsequence,
$$ v_n\rightharpoonup v \geq 0 \quad\mbox{in } H^1_0(\Omega) \quad \mbox{and} \quad v_n\ri v \geq 0\quad\mbox{in $L^2(\Omega)$}.$$
By (f3), there exists $A>\gamma(\infty)(1+\lambda_1)$ and $B>0$ such that
$$f(t)\geq A t-B\qquad\mbox{for all $t \in [0, \infty)$}.$$
Assuming that $n \in \NN$ is large enough we can assume, modifying $A$ is necessary, that
\begin{equation}\label{ajout4}
\mu_n f_1(t) - f_2(t) \geq A t-B \quad \mbox{for all $t \in [0, \infty)$}.
\end{equation}
Now, from the fact that $ \displaystyle \mathcal{E}'_{\mu_n}(u_n) \frac{\phi_1}{||u_n||_2} = 0$, we get using \eqref{ajout4},
\begin{align}\label{ajout5}
\di\intom\gamma\left(\frac{u_n^2+|\nabla u_n|^2}{2}\right) \Big( \nabla v_n\cdot\nabla\varphi_1 + v_n\varphi_1\Big) \, dx & \geq \di A\intom v_n\varphi_1dx
-\frac{B}{\|u_n\|_2}\, \intom\varphi_1 \, dx\nonumber\\
& -  \frac{1}{\|u_n\|_2}\, \intom h \varphi_1 \, dx.
\end{align}
Using Proposition \ref{lappendice}, which will be proved in the Appendix,  the left-hand side of \eqref{ajout5} tends to $$\gamma(\infty)\intom (v\varphi_1+\nabla v\cdot\nabla\varphi_1) \, dx=\gamma(\infty)(1+\lambda_1)\intom v\varphi_1 \, dx.$$ Thus, we get from \eqref{ajout5}, as $n\ri\infty$,
$$ \gamma(\infty)(1+\lambda_1)\intom v\varphi_1 \, dx\geq A\intom v\varphi_1 \, dx,$$
which contradict the assumption $A>\gamma(\infty)(1+\lambda_1)$ since $ v \geq 0$ and $v \not \equiv 0$.
\end{proof}

\begin{prop}\label{sequenceboundedPS}
The sequence $(u_n)_{n \in \NN}$ is a (bounded) Palais-Smale sequence for $\mathcal{E}(u)$.
\end{prop}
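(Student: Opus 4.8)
The plan is to show that the sequence $(u_n)_{n \in \NN}$, which satisfies $\mathcal{E}_{\mu_n}(u_n) = c_{\mu_n}$ and $\mathcal{E}'_{\mu_n}(u_n) = 0$ with $\mu_n \to 1$ and $(u_n)$ bounded in $H^1_0(\Omega)$ by Proposition \ref{sequencebounded}, is in fact an (approximate) Palais-Smale sequence for the limiting functional $\mathcal{E} = \mathcal{E}_1$. The idea is simply that $\mathcal{E}_{\mu}$ depends on $\mu$ only through the term $-\mu\int_\Omega F_1(u_+)\,dx$, which together with its derivative $-\mu\int_\Omega f_1(u_+)v\,dx$ is controlled uniformly on bounded sets by the growth hypotheses $(f2)$--$(f4)$ and the Sobolev embeddings.

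First I would estimate $|\mathcal{E}(u_n) - \mathcal{E}_{\mu_n}(u_n)|$. We have
$$\mathcal{E}(u_n) - \mathcal{E}_{\mu_n}(u_n) = (\mu_n - 1)\int_{\Omega} F_1((u_n)_+)\, dx.$$
Since $(u_n)$ is bounded in $H^1_0(\Omega)$, it is bounded in $L^{p+1}(\Omega)$ with $p+1 < 2^*$, and by $(f2)$ (or $(f4)$) we get $|F_1(t)| \leq C(t^2 + t^{p+1})$, so $\int_\Omega F_1((u_n)_+)\,dx$ is bounded; hence $\mathcal{E}(u_n) - \mathcal{E}_{\mu_n}(u_n) \to 0$, and since $c_{\mu_n} \to c_1$ (the map $\mu \mapsto c_\mu$ is, e.g., monotone and bounded on $J$, or one bounds $|c_{\mu_n} - c_1|$ directly along a fixed path), we conclude $\mathcal{E}(u_n) \to c_1$, the mountain pass level of $\mathcal{E}$.

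Next I would estimate the derivative. For any $v \in H^1_0(\Omega)$,
$$\big(\mathcal{E}'(u_n) - \mathcal{E}'_{\mu_n}(u_n)\big)v = (\mu_n - 1)\int_{\Omega} f_1((u_n)_+)v\, dx,$$
so
$$\|\mathcal{E}'(u_n)\|_{H^{-1}} \leq \|\mathcal{E}'_{\mu_n}(u_n)\|_{H^{-1}} + |\mu_n - 1|\,\Big\|f_1((u_n)_+)\Big\|_{(2^*)'} \cdot C.$$
The first term vanishes since $\mathcal{E}'_{\mu_n}(u_n) = 0$. For the second, $(f2)$ gives $|f_1(t)| \leq C(t + t^p)$, so $f_1((u_n)_+)$ is bounded in $L^{(p+1)/p}(\Omega) \hookrightarrow (H^1_0(\Omega))'$ because $(p+1)/p > (2^*)'$ when $p < 2^*-1$; hence $\|\mathcal{E}'(u_n)\|_{H^{-1}} \to 0$. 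Together with boundedness from Proposition \ref{sequencebounded}, this shows $(u_n)$ is a bounded Palais-Smale sequence for $\mathcal{E}$ at level $c_1$.

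The only genuinely delicate point is the convergence $c_{\mu_n} \to c_1$, since the mountain pass levels are defined by an inf-sup over paths and one must check the family $(\mathcal{E}_\mu)$ varies continuously in $\mu$ in the relevant sense; this follows from the uniform bound $|\mathcal{E}_\mu(u) - \mathcal{E}_{\mu'}(u)| \leq |\mu - \mu'|\int_\Omega F_1(u_+)\,dx$ on the fixed compact path realizing (approximately) $c_1$ together with the fact, noted in the construction, that one may take $J$ small enough that all the geometric estimates of Lemmas \ref{localminstructure} and \ref{pointoutside} hold uniformly. Everything else is a routine application of the growth conditions and Sobolev embeddings.
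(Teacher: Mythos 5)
Your proposal is correct and follows essentially the same route as the paper: write $\mathcal{E}(u_n)=\mathcal{E}_{\mu_n}(u_n)+(\mu_n-1)B(u_n)$ and $\mathcal{E}'(u_n)=\mathcal{E}'_{\mu_n}(u_n)+(\mu_n-1)B'(u_n)$, use the boundedness of $(u_n)_{n\in\NN}$ from Proposition \ref{sequencebounded} together with the growth conditions to kill the correction terms, and invoke the convergence $c_{\mu_n}\to c$. The only cosmetic difference is that you prove the (left-)continuity of $\mu\mapsto c_{\mu}$ directly via monotonicity and a fixed near-optimal path, whereas the paper simply cites \cite[Lemma 2.3]{Je}.
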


\begin{proof}
Since the map $\mu \mapsto c_{\mu}$ is continuous from the left, see \cite[Lemma 2.3]{Je} for the proof, we get
\begin{itemize}
\item $\mathcal{E}(u_n) = \mathcal{E}_{\mu_n}(u_n) + (\mu_n - 1)B(u_n) \underset{n \rightarrow \infty}{\longrightarrow} \left(\underset{n \rightarrow \infty}{\text{lim}} c_{\mu_n} = c\right)$
\item $\mathcal{E}'(u_n) = \mathcal{E}'_{\mu_n}(u_n) + (\mu_n - 1)B'(u_n) \underset{n \rightarrow \infty}{\longrightarrow}0$ in the dual $H_0^{-1}\left(\Omega\right).$
\end{itemize}
Namely, $\left(u_n\right)_{n \in \mathbb{N}}$ is a (bounded) Palais-Smale sequence for $\mathcal{E}$ at the mountain pass level $c$.
\end{proof}

At this point we can give,

\begin{proof}[Proof of Theorem \ref{thm2}]
Part (i) is a direct consequence of Lemma \ref{firstsolution}. To prove Part (ii) it suffices to observe that by Lemma \ref{sequencebounded} the bounded Palais-Smale sequence $(u_n)_{n \in \NN}$ strongly converge.
\end{proof}

\section{Appendix}

\begin{prop}\label{lappendice}
In the setting of Lemma \ref{sequencebounded}, we have, $\forall w \in H_0^1\left(\Omega\right),$
\begin{equation}\label{convergence}
\displaystyle\int_{\Omega}\gamma\left(\frac{u_n^2 + \lvert \nabla u_n \rvert^2}{2}\right)(v_n w + \nabla v_n \cdot \nabla w)\, dx \underset{n \rightarrow \infty}{\longrightarrow} \gamma(\infty)\displaystyle\int_{\Omega} (v w + \nabla v \cdot \nabla w)\, dx.
\end{equation}
\end{prop}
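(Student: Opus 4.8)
The plan is to show that the weight $\gamma_n(x):=\gamma\big(\tfrac12(u_n^2(x)+|\nabla u_n(x)|^2)\big)$, which by $(q3)$ satisfies $\gamma_{\min}\le\gamma_n\le\gamma_{\max}$ a.e., asymptotically acts like the constant $\gamma(\infty)$ when integrated against the $L^1(\Omega)$-bounded sequence $x\mapsto v_nw+\nabla v_n\cdot\nabla w$. First I would write the difference of the two sides of \eqref{convergence} as $I_n+J_n$, where
$$I_n:=\int_\Omega(\gamma_n-\gamma(\infty))(v_nw+\nabla v_n\cdot\nabla w)\,dx,\qquad J_n:=\gamma(\infty)\int_\Omega\big[(v_n-v)w+(\nabla v_n-\nabla v)\cdot\nabla w\big]\,dx.$$
The term $J_n$ tends to $0$ because, in the setting of Proposition~\ref{sequencebounded}, $v_n\to v$ strongly in $L^2(\Omega)$ and $\nabla v_n\rightharpoonup\nabla v$ weakly in $L^2(\Omega)$, while $w,\nabla w\in L^2(\Omega)$. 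So it suffices to prove $I_n\to0$.

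For $I_n$ I would use both pieces of information about $\gamma$ contained in $(q3)$: its boundedness and the limit $\gamma(s)\to\gamma(\infty)$ as $s\to\infty$. Given $\eps>0$, choose $M>0$ with $|\gamma(s)-\gamma(\infty)|<\eps$ for all $s\ge M$, and split $\Omega=A_n\cup B_n$ with $A_n:=\{x\in\Omega:\tfrac12(u_n^2(x)+|\nabla u_n(x)|^2)\ge M\}$. On $A_n$ one has $|\gamma_n-\gamma(\infty)|<\eps$ pointwise, so by the Cauchy--Schwarz inequality and the boundedness of $(v_n)$ in $H^1_0(\Omega)$ the contribution of $A_n$ to $I_n$ is at most $\eps C$, with $C$ independent of $n$. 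The decisive point is the estimate on $B_n$: here one cannot, and need not, control $|B_n|$; instead, from $u_n=\|u_n\|_2\,v_n$ and $\tfrac12(u_n^2+|\nabla u_n|^2)<M$ a.e.\ on $B_n$ one gets the pointwise bounds $|v_n(x)|<\sqrt{2M}/\|u_n\|_2$ and $|\nabla v_n(x)|<\sqrt{2M}/\|u_n\|_2$ a.e.\ on $B_n$. Since $\|u_n\|_2\to\infty$ and $\Omega$ is bounded, a crude estimate using $|\gamma_n-\gamma(\infty)|\le\gamma_{\max}-\gamma_{\min}$, Cauchy--Schwarz and $|\Omega|<\infty$ shows the $B_n$-contribution tends to $0$. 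Hence $\limsup_n|I_n|\le\eps C$, and since $\eps>0$ is arbitrary, $I_n\to0$; together with $J_n\to0$ this yields \eqref{convergence}.

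I expect the only genuinely delicate step to be the handling of $B_n$. The instinctive approach, namely trying to show that the ``bad set'' $B_n$ (where $\gamma_n$ may be far from $\gamma(\infty)$) has vanishing measure, is not available, since nothing forces $|B_n|\to0$. The correct observation is that on $B_n$ the normalized functions $v_n$ and their gradients are uniformly of order $1/\|u_n\|_2$, precisely because $\|u_n\|_2\to\infty$; replacing measure-type estimates by this pointwise degeneracy is what makes the argument close. Everything else --- the splitting, the vanishing of $J_n$, and the estimate on $A_n$ --- is a routine use of the uniform bounds from $(q3)$ together with the weak and strong convergence of $(v_n)$.
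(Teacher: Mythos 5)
Your proof is correct and follows essentially the same route as the paper: both arguments first strip off the weak-convergence term $J_n$ and then split $\Omega$ into a region where $\gamma$ is forced close to $\gamma(\infty)$ and a complementary region where $(v_n,\nabla v_n)$ is pointwise negligible, estimating each by Cauchy--Schwarz. The only (cosmetic) difference is that you place the threshold on the argument $\tfrac12(u_n^2+|\nabla u_n|^2)\ge M$ of $\gamma$, whereas the paper thresholds the normalized quantity $v_n^2+|\nabla v_n|^2\le 1/m$; the two decompositions are dual to one another and rely on exactly the same ingredients, namely $\gamma(s)\to\gamma(\infty)$, $\|u_n\|_2\to\infty$, and the $H^1_0$-bound on $(v_n)$.
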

\begin{proof}
We follow closely the proof of \cite[Lemma 7.2]{stuart}. Setting  $t_n =  \frac{1}{||u_n||_2}$ 	and using the weak convergence of
$v_n \rightharpoonup v$ in $H_0^1(\Omega)$, we see that \eqref{convergence} is equivalent to
$$\displaystyle\int_{\Omega}\left(\gamma\left(\frac{v_n^2 + \lvert \nabla v_n \rvert^2}{2t_n^2}\right) - \gamma(\infty)\right)(v_n w + \nabla v_n \cdot \nabla w)\, dx \underset{n \rightarrow \infty}{\longrightarrow} 0.$$
 Let $z_n = (v_n,\nabla v_n)$ and $y = (w,\nabla w)$. Then, $z_n, y  \in L^2\left(\Omega\right)^{N + 1}$ and, since  $(v_n)_{n \in \mathbb{N}}$ is bounded in $H_0^1\left(\Omega\right)$, there exists a constant $Z >0$ such that $\int_{\Omega}|z_n|^2 dx \leq Z$ for all $n \in \NN$.  \smallskip

Also, setting for $(m,n) \in \mathbb{N}^2$, $A^m_n = \left\{x \in \Omega / \lvert z_n \rvert^2 \leq \frac{1}{m}\right\}$ and $B^m_n = \Omega \backslash A^m_n$, we have,
$$\displaystyle\int_{\Omega}\left(\gamma\left(\frac{v_n^2 + \lvert \nabla v_n \rvert^2}{2t_n^2}\right) - \gamma(\infty)\right)(v_n w + \nabla v_n \nabla w)\, dx =
 \displaystyle\int_{\Omega}\left(\gamma\left(\frac{\lvert z_n\rvert^2}{2t_n^2}\right) - \gamma(\infty)\right)\langle z_n,y\rangle\, dx$$
$$= \left(\displaystyle\int_{A^m_n} + \displaystyle\int_{B^m_n}\right)\left[\left(\gamma\left(\frac{\lvert z_n\rvert^2}{2t_n^2}\right) - \gamma(\infty)\right)\langle z_n,y\rangle\, dx\right].$$
For all $(m,n) \in \mathbb{N}^2$, since $\gamma(s)$ is bounded by $(q3)$ there exists $C > 0$ such that,
$$\displaystyle\int_{A^m_n}\left\lvert \gamma\left(\frac{\lvert z_n\rvert^2}{2t_n^2}\right) - \gamma(\infty)\right\rvert  |z_n||y| \, dx \leq \frac{C}{\sqrt{m}}\displaystyle\int_{A^m_n}\lvert y \rvert\, dx \leq \left(C\sqrt{\frac{\lvert \Omega \rvert}{m}}\right)   \displaystyle \left(\int_{A^m_n}\lvert y \rvert\ ^2 \,dx \right)^{\frac{1}{2}}.$$
Also, for every $m \in \NN$, there exists $S_m >0$ such that $\lvert \gamma(s) - \gamma(\infty) \rvert < \frac{1}{m}$ for all $s \geq S_m$. Since $t_n \to 0$, there exists $N(m) >0$ such that $t_n^2 \leq  \displaystyle \frac{1}{2m S_m}$ for all $n \geq N(m)$. Hence $ \displaystyle |\gamma(\frac{1}{2t_n^2}|z_n|^2) - \gamma(\infty)| < \frac{1}{m} $ $ n \geq N(m)$ on $B^m_n$ and so
%$$ \int_{B^m_n}\left\lvert \gamma\left(\frac{\lvert z_n\rvert^2}{2t_n^2}\right) - \gamma(\infty)\right\rvert  |z_n||y| \, dx \leq  \frac{1}{m}  \left( \int_{\Omega} |z_n|^2 dx \right)^{\frac{1}{2}}  \left( \int_{\Omega} |y|^2 dx \right)^{\frac{1}{2}} \leq \frac{\sqrt{Z}}{m}  \left( \int_{\Omega} |y|^2 dx \right)^{\frac{1}{2}}.$$
$$ \begin{array}{ll} \displaystyle\int_{B^m_n}\left\lvert \gamma\left(\frac{\lvert z_n\rvert^2}{2t_n^2}\right) - \gamma(\infty)\right\rvert  |z_n||y| \; dx & \displaystyle\leq \frac{1}{m}  \left( \int_{\Omega} |z_n|^2 dx \right)^{\frac{1}{2}}  \left( \int_{\Omega} |y|^2 dx \right)^{\frac{1}{2}}\\
&\displaystyle \leq \frac{\sqrt{Z}}{m}  \left( \int_{\Omega} |y|^2 dx \right)^{\frac{1}{2}}.\end{array}$$
Thus, it follows that, $\forall m \in \mathbb{N},\; \forall n \in \mathbb{N},\; n \geq N(m)$,
$$\displaystyle\int_{\Omega}\left\lvert \gamma\left(\frac{\lvert z_n\rvert^2}{2t_n^2}\right) - \gamma(\infty)\right\rvert  |z_n||y| \; dx \leq \left(C\sqrt{\frac{\lvert \Omega \rvert}{m}} + \frac{\sqrt{Z}}{m}\right) \left( \int_{\Omega} |y|^2 dx \right)^{\frac{1}{2}},$$
from which we deduce that
$$\displaystyle\int_{\Omega}\left\lvert \gamma\left(\frac{\lvert z_n\rvert^2}{2t_n^2}\right) - \gamma(\infty)\right\rvert  |z_n||y| \; dx \underset{n \rightarrow \infty}{\longrightarrow} 0,$$
ending the proof.
\end{proof}

\medskip
{\bf Acknowledgements.} This work was initiated during a visit of the second author to the University of Franche-Comt\'e in March 2013.  The second author thanks the University of Franche-Comt\'e for the financial support. The authors are grateful to Professor Haim Brezis for his valuable comments on a previous version of this paper. The first author also thanks M. L. M. Carvalho and E. D. Silva for discussing with him their results in \cite{CaGoSi,SiCaSiGo} connected with Lemma \ref{LEM3} and D.~Arcoya and J.~Giacomoni for useful remarks.

\end{document}